\documentclass{amsart}
\usepackage{amssymb,amsmath,amsfonts,amsthm}
\usepackage{graphicx}
\graphicspath{ {./images/} }

\usepackage{psfrag}
\usepackage{mathtools}
\usepackage{color}
\usepackage{todonotes}
\usepackage{enumitem}
%%%%

%\usepackage{showlabels}

\theoremstyle{plain}
\newtheorem{main}{Theorem}

\newtheorem{theorem}{Theorem}[section]
\newtheorem{lemma}[theorem]{Lemma}
\newtheorem{proposition}[theorem]{Proposition}

\theoremstyle{remark}
\newtheorem{remark}[theorem]{Remark}
\newtheorem{definition}{Definition}

\newcommand{\Leb}{\operatorname{vol}}

\newcommand{\C}{\operatorname{C}}

\newcommand{\Gibbs}{\operatorname{Gibbs}}

\newcommand{\diam}{\operatorname{diam}}

           \def\ea{\end{array}}
          \def\ec{\end{center}}
     \def\ed{\end{description}}
        \def\ee{\end{equation}}
       \def\eea{\end{eqnarray}}
     \def\eeaa{\end{eqnarray*}}
 \def\et{\end{thebibliography}}

\def\Phy{\operatorname{PhL}}

\def\Diff{{\rm Diff}}

\def\cG{{\mathcal G}}
\def\cA{{\mathcal A}}

\def\cC{{\mathcal C}}

\def\cU{{\mathcal U}}

\def\cB{{\mathcal B}}

\def\cF{{\mathcal F}}

\def\cP{{\mathcal P}}

\def\cS{{\mathcal S}}

\def\vep{\varepsilon}

\def\TT{{\mathbb T}}

\def\ZZ{{\mathbb Z}}
\def\NN{{\mathbb N}}

\title{Statistical properties of physical-like measures}
\date{\today}
\author{Shaobo Gan, Fan Yang, Jiagang Yang and Rusong Zheng}
\thanks{S.G. is supported by NSFC 11231001 and NSFC 11771025. J.Y. is partially supported by CNPq, FAPERJ, and PRONEX of Brazil and NSFC 11871487 of China.}

\address{School of Mathematical Sciences, Peking University, Beijing 100871, China}
\email{gansb\@@pku.edu.cn}

\address{Department of Mathematics, Michigan State University, East Lansing, Michigan, USA.}
\email{fan.yang-2\@@ou.edu, yangfa31@msu.edu}

\address{Departamento de Geometria, Instituto de Matem\'atica e Estat\'\i stica, Universidade Federal Fluminense, Niter\'oi, Brazil}
\email{yangjg\@@impa.br}

\address{Southern University of Science and Technology, Shenzhen 518055, China}
\email{zhengrs\@@sustech.edu.cn}

\setcounter{tocdepth}{2}
\begin{document}

\begin{abstract}
In this paper we consider the semi-continuity of the physical-like measures for diffeomorphisms with dominated splittings. We prove that any weak-* limit of physical-like measures along a sequence of $C^1$ diffeomorphisms $\{f_n\}$ must be a Gibbs $F$-state for the limiting map $f$. As a consequence, we establish the statistical stability for the $C^1$ perturbation of the time-one map of three-dimensional Lorenz attractors, and the continuity of the physical measure for the diffeomorphisms constructed by Bonatti and Viana.

\end{abstract}

\maketitle

%\tableofcontents

%\setcounter{tocdepth}{1} \tableofcontents
\section{Introduction}
Let $f:M\to M$ be a diffeomorphism on some compact Riemannian manifold $M$.
An $f$-invariant probability measure  $\mu$  is a \emph{physical measure} if the set of points $x\in M$ for which the empirical measures $\delta_x^{f,n}$ satisfy
\begin{equation}\label{eq.basin}
\delta_x^{f, n}:=\frac{1}{n}\sum_{i=0}^{n-1}\delta_{f^i(x)}\to \mu \text{ (in the weak-*
sense)}
\end{equation}
has positive volume. This set is called the {\em basin of $\mu$} and is denoted by $B(\mu)$.  We say that an invariant probability $\mu$ is \emph{physical-like} if:
for any small neighborhood $U$ of $\mu$ inside the space of probabilities $\cP(M)$ (not necessarily invariant under $f$) with respect to the
weak-* topology, the set
$$\{x\in M: \text{ there are infinitely many $n_{x,k}\in\NN$ such that } \delta^{f,n_{x,k}}_x\in U\}$$
has positive volume. The set of physical-like measures of $f$ is denoted by $\Phy(f)$. Even though $f$ may not have any physical measure, $\Phy(f)$ is always nonempty, and any physical measure is physical-like (see \cite{CE}).

In this paper, we investigate the properties of physical-like measures under  the
setting of diffeomorphisms with dominated splitting. More precisely, we assume that there exists a
splitting $TM = E \oplus F$ of the tangent bundle that is invariant under the tangent map $Df$, and
satisfies
\begin{equation}\label{eq.dominated}
\|(Df\mid_{F(x)})^{-1})\| \|Df \mid_{E(x)}\| < 1  \text{ at every } x \in M.
\end{equation}
In other words, the bundle $F$ \emph{dominates} the bundle $E$. 

A program
for investigating the physical measures of partially hyperbolic diffeomorphisms or diffeomorphisms with dominated splitting
was initiated by Alves, Bonatti, Viana in \cite{BoV00,ABV00}. Further results can be found in~\cite{Dol00, DVY16,An10,AnV18, AnV,Yang-partial,Yang-expanding,HYY,CYZ} and the references therein.
All these works rely on the Pesin theory, in particular,  the absolute continuity of the Pesin stable lamination. As a result, $f$ has been assumed to be at least $C^{1+\alpha}$.

Below we will introduce a different method to describe the physical and physical-like measures, which focuses more on the Ruelle's inequality and Pesin's entropy formula, and
works even for $C^1$ systems.

\begin{definition}
We say that  a probability measure $\mu$ is a \emph{Gibbs $F$-state} of $f$ if
\begin{equation}\label{e.GibbsF}
h_\mu(f)\geq \int \log \mid \det(Df\mid_{F(x)})\mid d\mu(x),
\end{equation}
where $h_\mu(f)$ is the measure-theoretic entropy of $\mu$.
We denote the space of Gibbs $F$-states by $\Gibbs^F(f)$.
\end{definition} When all the Lyapunov exponents of $\mu$ along
the $F$ bundle  at almost every point are positive, and when the other exponents are non-positive, then combined with Ruelles's inequality \cite{Rue78},
it is easy to see that the previous inequality is indeed an equality, which is known as Pesin's
entropy formula.

The relation between physical-like measures and Gibbs $F$-states is established by Catsigeras, Cerminara, and Enrich in \cite{CCE}:\footnote{In~\cite{CYZ} and~\cite{HYY} a similar result is obtained for the {\em Gibbs $u$-states} of $C^1$ partially hyperbolic diffeomorphisms. Here a measure is called a Gibbs $u$-state  if it satisfies the relation $h_\mu(f,\cF^u)\geq \int \log \mid \det(Df\mid_{E^u(x)})\mid d\mu(x)$, where $h_\mu(f,\cF^u)$ is the {\em partial entropy} along the unstable foliation. Also note that Proposition~\ref{p.GibbsF} does not require $f$ to be partially hyperbolic.}

\begin{proposition}\label{p.GibbsF}
Let $f$ be a $C^1$ diffeomorphism which admits a dominated splitting $E\oplus F$, then
there is a full volume subset $\Gamma$ such that, for any $x\in \Gamma$, any  limit point  $\mu$ of the
sequence $\{\delta^{f,n}_x\}$ belongs to $\Gibbs^F(f)$. Moreover, we have $\Phy(f)\subset \Gibbs^F(f)$.
\end{proposition}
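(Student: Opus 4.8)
I would argue as follows. The second assertion reduces to the first: if $\mu\in\Phy(f)$ then for every weak-$*$ neighbourhood $U$ of $\mu$ the set of points whose empirical measures lie in $U$ for infinitely many times has positive volume, hence meets the full-volume set $\Gamma$; a point $x'\in\Gamma$ from this set has a limit measure of $\{\delta^{f,n}_{x'}\}$ in $\overline U\cap\Gibbs^F(f)$, and letting $U\downarrow\{\mu\}$ exhibits $\mu$ as a weak-$*$ limit of measures in $\Gibbs^F(f)$. Since the right-hand side of \eqref{e.GibbsF} is continuous in $\mu$, it then remains to know that $\Gibbs^F(f)$ is weak-$*$ closed — a property established separately, amounting to upper semicontinuity of the entropy in the present setting — and to prove the first assertion. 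For the latter I would fix $x$ in a full-volume set to be constructed, write $p\omega(x)$ for the set of weak-$*$ limit points of $\{\delta^{f,n}_x\}_n$, choose $\mu\in p\omega(x)$ with $\delta^{f,n_k}_x\to\mu$, and fix a small $\epsilon>0$ together with a finite partition $\cP$ of $M$ of diameter $<\epsilon$ with $\mu$-null boundary. The aim is to sandwich the number $\cN_{n_k}$ of distinct $(\cP,n_k)$-itineraries realised on an $F$-plaque through $x$ between $\exp(n_k(\int\log|\det(Df|_F)|\,d\mu-o(1)))$ and $\exp(n_k(h_\mu(f)+o(1)))$; letting $k\to\infty$ and then $\epsilon\to0$ gives \eqref{e.GibbsF}.

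For the lower bound, recall that since $F$ dominates $E$ there are forward-invariant cone fields around $F$ and, by the Hirsch--Pugh--Shub plaque family theorem (valid in the $C^1$ topology), a continuous family of $C^1$ embedded $u$-disks $\cW^F_\delta(x)$, $u=\dim F$, tangent to $F(x)$ at $x$ and of uniform radius $\delta$. Put $V_n(x):=\Leb_u(f^n(\cW^F_\delta(x)))=\int_{\cW^F_\delta(x)}\Jac(Df^n|_{T\cW})$. The delicate step — the one place where $C^1$ has to be handled with care, since no bounded distortion is available — is the comparison of this Jacobian with $\sum_{j=0}^{n-1}\log|\det(Df|_{F(f^jy)})|$: by domination the angle between $T_{f^jy}f^j\cW^F_\delta(x)$ and $F(f^jy)$ decays geometrically in $j$, uniformly in $y$, and for each fixed base point $V\mapsto\Jac(Df|_V)$ on the Grassmannian of $u$-planes is Lipschitz with a uniform constant, so summing a geometric series shows $\log\Jac(Df^n|_{T\cW})(y)$ differs from $\sum_{j<n}\log|\det(Df|_{F(f^jy)})|$ by a bound independent of $y$ and $n$. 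Hence $V_n(x)\asymp\int_{\cW^F_\delta(x)}\exp(\sum_{j<n}\log|\det(Df|_{F(f^jy)})|)\,dy$, and Jensen's inequality yields $\frac1n\log V_n(x)\ge\int\log|\det(Df|_F)|\,d\sigma_n+o(1)$, with $\sigma_n=\frac1n\sum_{j=0}^{n-1}(f^j)_*m_\cW$ and $m_\cW$ the normalised volume on $\cW^F_\delta(x)$. Finally any atom of $\bigvee_{j=0}^{n-1}f^{-j}\cP$ meeting $\cW^F_\delta(x)$ has $f^n$-image of diameter $<\epsilon$, hence $u$-volume $\le c\epsilon^u$, so $\cN_n\ge V_n(x)/(c\epsilon^u)$ and $\liminf_k\frac1{n_k}\log\cN_{n_k}\ge\int\log|\det(Df|_F)|\,d\sigma$ for any weak-$*$ limit $\sigma$ of $\{\sigma_{n_k}\}$.

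For the upper bound, I would invoke the Katok-type estimate behind the easy half of the variational principle: for a fixed $\cP$, the number of $(\cP,n)$-itineraries whose empirical frequency vector is $\epsilon$-close to that of an invariant measure $\nu$ is at most $\exp(n(h_\nu(f)+\rho(\epsilon)))$ with $\rho(\epsilon)\to0$, which follows from Shannon--McMillan--Breiman applied to the ergodic components of $\nu$. If every itinerary counted by $\cN_{n_k}$ were $\epsilon$-typical for the $\sigma$ of the previous paragraph, the two bounds would combine (after $k\to\infty$, then $\epsilon\to0$) to give $\int\log|\det(Df|_F)|\,d\sigma\le h_\sigma(f)$.

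The hard part — the technical core — is that this is not quite the statement wanted: $\sigma$ is only the \emph{average} of the empirical measures of the points of the plaque, for which they need not be individually typical; and, more seriously, $\sigma$ is tied to the plaque while the statement concerns the limit $\mu$ of the empirical measures of the \emph{centre} $x$, and these genuinely differ because $F$ is the expanding bundle and the plaque drifts away from the orbit of $x$. This is exactly where the hypothesis that $\mu$ is a limit of empirical measures of a set of \emph{positive volume} must enter essentially: instead of one plaque I would work with the family of $F$-plaques foliating a Lebesgue density ball, use Fubini both to reach a full-volume set of admissible centres and to split $V_n$ over the itinerary-pieces, discard the pieces whose frequency vectors are atypical (estimating their total $u$-volume to be negligible on the exponential scale), and choose $\cP$ adapted to $\mu$. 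Assembling this — together with the limits in $\epsilon$ and the construction of $\Gamma$ — finishes the proof; I expect reconciling the plaque-average $\sigma$ with the centre's limit $\mu$, without any absolute continuity of the $E$-lamination (unavailable in class $C^1$), to be the main obstacle.
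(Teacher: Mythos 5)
Your proposal has two genuine gaps, one in each half. First, the reduction of $\Phy(f)\subset\Gibbs^F(f)$ to the pointwise statement plus ``weak-$*$ closedness of $\Gibbs^F(f)$'' does not work here: as you yourself note, that closedness amounts to upper semicontinuity of $\mu\mapsto h_\mu(f)$, and for $C^1$ diffeomorphisms with only a dominated splitting this is exactly what is \emph{not} available (see the table in the Introduction and Proposition~\ref{p.1}(4), which record compactness of $\Gibbs^F(f)$ only \emph{conditionally} on that semicontinuity; the examples in Section~\ref{s.examples} are precisely situations where it fails). Your argument only yields $\mu\in\overline{\Gibbs^F(f)}$, which is strictly weaker. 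The second assertion needs its own run of the core estimate, not a soft compactness reduction.

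Second, in the proof of the pointwise statement you have correctly isolated the crux --- the plaque-averaged measure $\sigma$ need not coincide with the limit $\mu$ of $\delta^{f,n}_x$ --- but the remedy you sketch (discard the itinerary pieces whose frequency vectors are atypical for $\mu$, ``estimating their total $u$-volume to be negligible'') is circular: when $\mu$ is \emph{not} a Gibbs $F$-state, the $\mu$-typical pieces are exactly the ones carrying exponentially small volume, which is the conclusion, not an input. The way out (this is the argument of \cite{CCE}, reproduced in Section~\ref{s.3.3} of this paper with $f_n\equiv f$) is to argue by contradiction for a fixed non-Gibbs $\mu$: choose a \emph{convex} neighborhood $U$ of $\mu$ on which the $N$-step pressure $\frac1NH_\nu\bigl(\bigvee_{i=0}^{N-1}f^{-i}\cA\bigr)+\int\phi^F_f\,d\nu$ is uniformly $\le -b<0$, restrict to $\cG_m=\{y\in I:\delta^{f,m}_y\in U\}$, take a maximal $(\delta,m)$-separated subset $E_m\subset\cG_m$, and observe that the time-averaged push-forward of the weighted atomic measure on $E_m$ lies in $U$ \emph{by convexity}, so the variational-principle computation bounds the separated-set pressure by $-b$; combined with the Jacobian/cone estimate this gives $\Leb_I(\cG_m)\lesssim e^{-bm}$, and Borel--Cantelli kills the set of $y$ with $\delta^{f,m}_y\in U$ infinitely often. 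This simultaneously proves the ``Moreover'' (it contradicts $\mu\in\Phy(f)$ directly) and, after covering the non-Gibbs invariant measures by countably many such neighborhoods (Lindel\"of) and intersecting the resulting full-volume sets, produces the set $\Gamma$ for the first assertion. Your volume-growth lower bound and type-counting upper bound are fine as heuristics, but without the convexity/separated-set device the identification of the relevant measure with $\mu$ is missing, and that is the whole point of the proof.
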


In particular, their result shows that $\Gibbs^F(f)$ is always non-empty; furthermore,  if $f$ admits a unique Gibbs $F$-state $\mu$, then $\mu$ is a physical measure whose basin has full volume.

The relation between physical-like measures and the inequality~\eqref{e.GibbsF} was first discovered by Keller \cite[Theorem 6.1.8]{K} for one-dimensional $C^1$ map with Markov partition. Campbell and Quas
used Keller's result in \cite{CQ} to show that every $C^1$ generic circle expanding map admits a unique physical measure,
whose basin has full volume. The key point of Campbell and Quas' proof is to show that generic expanding map
admits a unique Gibbs $F$-state. Later, Qiu (\cite{Q}) built the same result for uniformly hyperbolic
attractors, and  proved that  $C^1$ generic hyperbolic attractor admits a unique physical measure.

Despite that all the works mentioned above are under $C^1$ generic context, we would like to point out that Proposition~\ref{p.GibbsF} works beyond $C^1$ generic setting. It can be applied to a given diffeomorphism with higher regularity ($C^{1+\alpha}$) where it is used in combination with Pesin's theory. Examples of such applications include  partially hyperbolic diffeomorphisms
with mostly contracting center or with mostly expanding center,  see for instance \cite{Yang-partial,HYY,Yang-expanding,HUY}. %Thus Gibbs $F$-states and the previous proposition play important roles in the modern research of physical measures.

The table below  summarizes the various properties of $\Phy(f)$ and $\Gibbs^F(f)$. For the precise statements and proofs, see Proposition~\ref{p.1} in the next section.
\begin{center}
\renewcommand{\arraystretch}{1.4} 
    \begin{tabular}{|c|c|c|}
    \hline
    {\hspace{2mm}}  {\hspace{2mm}} & {\hspace{1cm}} $\Phy(f)$ {\hspace{1cm}} &   $\Gibbs^F(f) $  \\\hline
    Existence & True & True \\\hline
    Convexity & False & True    \\\hline  
    Compactness & True  & {\hspace{.5mm}} If $h_{ \boldsymbol{.}}(f)$ is upper semi-continuous {\hspace{.5mm}} \\\hline
    Semi-continuity & \color{blue} Theorem~\ref{m.convergency} below & If $h_\mu( \boldsymbol{\cdot})$ is upper semi-continuous \\\hline
    \end{tabular}
\end{center}

Here note that the compactness and semi-continuity of $\Gibbs^F(f)$ largely depend on the continuity of the metric entropy as a function of the invariant measure and of the diffeomorphism. Examples of such continuity include $C^\infty$ maps (by Buzzi~\cite{B97} and Yomdin~\cite{Yom}), diffeomorphisms away from tangencies (by Liao, Viana and Yang \cite{LVY}), time-one map of Lorenz-like flows (by Pacifico, F. Yang and J. Yang \cite{PYY}) and many more.   However, there are also many counterexamples where the metric entropy function fails to be upper semi-continuous (see~\cite{BCT}, \cite{M} and \cite{N1}).

%For $C^1$ generic volume preserving diffeomorphisms, the Pesin formula holds for the volume measure was observed by Tazhibi \cite{T}, Sun and Tian \cite{ST}.

%However,  the relation and the difference between the two classes of measures: the physical-like measures and Gibbs $F$-states, is very subtle.  Following the definition, the set of physical-like measures is closed; on the other hand, the compactness of $\Gibbs^F(f)$ depends heavily on the regularity of the metric entropy function $h_\mu(f)$: when $h_\mu(f)$ is upper semi-continuously (as a function of $\mu$, under the weak-* topology), then the space $\Gibbs^F(f)$ is compact. This holds, for instance: $C^\infty$ maps by Yomdin \cite{Yom}; diffeomorphisms away from tangency by Liao, Viana and Yang \cite{LVY}; and Lorenz-like attractors by Pacifico, F. Yang and J. Yang \cite{PYY}.
%On the other hand, in general it is not easy to obtain the continuity of physical or physical-like measures with respect to $f$ (under either $C^1$ or $C^{1+\alpha}$ topology), but with additional assumption on the regularity of  $h_\mu(f)$ (upper semi-continuity as a function of $f$ under proper topology), the continuity of $\Gibbs^F(f)$ is much easier to obtain.

In this paper, we are going to reveal further connections between the two spaces of measures,
under the context of $C^1$ perturbation theory. More precisely, we will establish a more general continuity for  physical-like measures,
without any extra hypothesis such as $h$-expansiveness.

\begin{main}\label{m.convergency}
Suppose that $f$ is a $C^1$ diffeomorphism which admits a dominated splitting $E\oplus F$, and $\{f_n\}$ is a sequence of diffeomorphisms
converging to $f$ in the $C^1$ topology. Then any weak-* limit of any sequence of physical-like measures $\mu_n$ of $f_n$
is a Gibbs $F$-state of $f$.
\end{main}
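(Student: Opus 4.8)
For a $C^1$ diffeomorphism $g$ admitting a dominated splitting $TM=E_g\oplus F_g$, write $\Phi_g(x)=\log\mid\det(Dg\mid_{F_g(x)})\mid$ and $L(g,\nu)=\int\Phi_g\,d\nu$ for $g$-invariant $\nu$. Since the domination~\eqref{eq.dominated} is a $C^1$-open condition and the invariant bundles depend continuously on the map, for all large $n$ the diffeomorphism $f_n$ carries a dominated splitting $E_n\oplus F_n$ with $E_n\to E$ and $F_n\to F$ uniformly; in particular $\Phi_{f_n}\to\Phi_f$ uniformly, which together with $\mu_n\to\mu$ in the weak-* topology gives $L(f_n,\mu_n)\to L(f,\mu)$. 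By Proposition~\ref{p.GibbsF} applied to each $f_n$ we have $\mu_n\in\Gibbs^F(f_n)$, that is $h_{\mu_n}(f_n)\ge L(f_n,\mu_n)$. Were the metric entropy upper semi-continuous along $(f_n,\mu_n)\to(f,\mu)$ this would already give $h_\mu(f)\ge L(f,\mu)$; the difficulty, and the point of the theorem, is that it need not be, so the known entropy of $\mu_n$ must be localized and tracked more carefully.

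\textbf{Passing to partial entropy along $F$.}
The plan is to work not with the full entropy but with the partial entropy $h_\nu(g,\cF^F)$ along the locally $g$-invariant plaque family $\cF^F$ tangent to $F_g$ that the dominated splitting provides (Hirsch--Pugh--Shub): i.e. $H_\nu(\xi\mid g\xi)$ for a measurable partition $\xi$ subordinate to this plaque family and adapted to $g$, exactly as the partial entropy along $\cF^u$ is used for Gibbs $u$-states. Three facts are needed: (i) $h_\nu(g,\cF^F)\le h_\nu(g)$ always (the standard inequality between partial and full entropy for partitions subordinate to an invariant plaque family); (ii) if $\nu$ is physical-like for $g$, then $h_\nu(g,\cF^F)\ge L(g,\nu)$ — a sharpening of Proposition~\ref{p.GibbsF} obtained by rereading the proof of~\cite{CCE}, which is intrinsically a volume-growth estimate along the $F_g$-plaques and hence already yields this stronger lower bound on the partial entropy; and (iii) $(g,\nu)\mapsto h_\nu(g,\cF^F)$ is upper semi-continuous at $(f,\mu)$ along $g=f_n\to f$ in $C^1$ and $\nu=\mu_n\to\mu$ in the weak-* topology. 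Granting these,
\[
h_\mu(f)\ \overset{(i)}{\ge}\ h_\mu(f,\cF^F)\ \overset{(iii)}{\ge}\ \limsup_{n}h_{\mu_n}(f_n,\cF^F)\ \overset{(ii)}{\ge}\ \limsup_{n}L(f_n,\mu_n)\ =\ L(f,\mu),
\]
which is precisely $\mu\in\Gibbs^F(f)$.

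\textbf{Why (iii) holds, and where domination enters.}
The gain in replacing $h_\bullet$ by $h_\bullet(\cdot,\cF^F)$ is that the latter cannot absorb the ``hidden'' small-scale complexity that makes the full entropy fail to be upper semi-continuous in a $C^1$ limit: the entropy it measures is produced by the growth of $F_g$-plaques, and the domination hypothesis makes the plaque families — and the way a $g$-orbit spreads them apart — vary \emph{uniformly} with $g$ near $f$. Concretely, I would fix a finite system of local charts in which, for every $g$ in a $C^1$-neighbourhood of $f$, the $\cF^F_g$-plaques are uniformly Lipschitz graphs over a cone around $F$, choose partitions $\xi_n$ subordinate to the plaque family of $f_n$ that are uniformly comparable across $n$, and bound $H_{\mu_n}(\xi_n\mid f_n\xi_n)$ from above by quantities that, as $n\to\infty$, converge to the corresponding quantity for $f$; here $\mu_n\to\mu$ and the uniform geometric control of the plaques are what is used, and one deliberately avoids any appeal to the moduli of continuity of $Df_n$ (which are \emph{not} uniform along a merely $C^1$-convergent sequence), only to their $C^0$-proximity to $Df$.

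\textbf{Main obstacle.}
The heart of the matter is step (iii): proving that partial entropy along the $F$-plaques is upper semi-continuous under $C^1$ perturbation. This is exactly where the dominated splitting is indispensable — it supplies the uniform control of the plaque families, of their local invariance, and of the combinatorics of how orbits separate along them — and it is what excludes the small-scale entropy phenomenon that defeats upper semi-continuity of the full entropy. The remaining points are routine: setting up (i) and (ii) when $F$ is not integrable, so that $\cF^F_g$ is only a locally invariant plaque family (handled as in Ledrappier--Young theory for non-uniformly hyperbolic laminations), the continuity $L(f_n,\mu_n)\to L(f,\mu)$ used above, and the reduction to ergodic $\mu_n$ via the ergodic decomposition.
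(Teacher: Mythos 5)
There is a genuine gap. Your argument hinges on a partial entropy $h_\nu(g,\cF^F)$ along a plaque family tangent to $F_g$, together with the three facts (i)--(iii); but in the generality of the theorem this object is not available, and (ii)--(iii) are each at least as hard as the statement being proved. For a general dominated splitting $E\oplus F$ the bundle $F$ need not be uniformly expanding and need not be integrable; the Hirsch--Pugh--Shub locally invariant plaque families tangent to $F$ are neither unique nor coherent, so a measurable partition ``subordinate to $\cF^F$ and adapted to $g$'' and the conditional entropy $H_\nu(\xi\mid g\xi)$ are not well defined, and neither (i) nor (ii) is a standard fact one can quote. The paper's own footnote makes exactly this distinction: the partial-entropy versions of Proposition~\ref{p.GibbsF} in~\cite{CYZ,HYY} require partial hyperbolicity with an actual unstable foliation, whereas Proposition~\ref{p.GibbsF} and Theorem~\ref{m.convergency} do not. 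Indeed both applications in Section~\ref{s.examples} fall outside your framework ($F^{cu}$ of a singular hyperbolic attractor contains the flow direction with a zero exponent; $E^{cu}$ for Bonatti--Viana is integrable only via the refined construction of~\cite{BF}). Your step (ii) is not a ``rereading'' of~\cite{CCE} --- that proof is a Bowen-ball volume estimate for the \emph{full} entropy --- and your step (iii), joint upper semi-continuity of partial entropy in $(g,\nu)$ along a merely $C^1$-convergent sequence of maps, is precisely the hard analytic content, which your sketch does not supply. The closing remark that one may ``reduce to ergodic $\mu_n$'' is also false: the paper notes explicitly that physical-like measures need not be ergodic and that neither $\Phy(f)$ nor $\Gibbs^F(f)$ behaves well under ergodic decomposition.

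The paper's proof takes a different and more elementary route that avoids any semicontinuity of (partial) entropy. It argues by contradiction: if $h_\mu(f)+\int\phi_f^F\,d\mu\le -a<0$, then for a fine finite partition $\cA$ with $\mu_i(\partial\cA)=0$ and a fixed $N$, the quantity $\frac1N H_{\nu}\bigl(\bigvee_{i=0}^{N-1}f_n^{-i}\cA\bigr)+\int\phi^{F_n}_{f_n}d\nu$ is uniformly negative for all $\nu$ in a convex neighborhood $U_n$ of $\mu_n$ and all large $n$ (Lemmas~\ref{l.measure}--\ref{l.partition} handle the fact that the join depends on $f_n$). A Misiurewicz-style count over $(\delta_2,m)$-separated subsets of the good sets $\cG^{I,n}_m$ on disks tangent to the local $F_n$-cone, combined with the bounded volume of forward images of such disks (Lemmas~\ref{l.localvolume}--\ref{l.volumebound}), shows $\Leb_I(\cG^{I,n}_m)$ decays exponentially in $m$; Borel--Cantelli and Fubini then contradict $\mu_n\in\Phy(f_n)$. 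If you want to salvage your approach, you would need to restrict to settings where $F$ integrates to an invariant expanding foliation and then actually prove (ii) and (iii); as written, the proposal assumes its hardest steps.
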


As a result, if a diffeomorphism $f$ admits a dominated splitting and has a unique Gibbs $F$-state $\mu$,
then for any $C^1$ nearby diffeomorphism $g$, and for any point $x$ in a full volume subset, any weak-* limit of the empirical measures
$\{\delta_x^{g,n}\}_n$ must be close to $\mu$. In particular, any physical measure of
$g$ (when exists) must be close to $\mu$. This means that the existence of a unique Gibbs $F$-state is an intrinsic
statistically stable property.

As an application of Theorem~\ref{m.convergency}, we will show in Section~\ref{s.examples} that the time-one map  of three-dimensional singular hyperbolic attractors and the example of Bonatti and Viana on $\TT^4$ are statistically stable. More precisely, we will show that for the diffeomorphism of Bonatti and Viana, the unique physical measure  varies continuously in $C^1$ and weak-* topology. We would like to remark that this result could also be obtained from the general criterion of~\cite[Theorem E]{SYY} combined with the careful study of the entropy structure for the Bonatti-Viana maps  in~\cite{CDT}, which already shows that the unique physical measure is ``almost expansive'' (for the precise definition, see~\cite[Definition 2.3]{CDT}).  However, the method in this paper does not rely on this fact.

\begin{remark}
Even though we assume that the dominated splitting is defined on the entire manifold $M$, it is straight forward to check that the Theorem~\ref{m.convergency} remains true when the dominated splitting is defined on a compact invariant set  $\Lambda$. %Here $\Lambda$ is call isolated if there exists an open neighborhood $U$ of $\Lambda$ such that $\Lambda$ is the maximal invariant set in $U$.
In this case the dominated splitting on $\Lambda$, together with the invariant cones, can be extended to a small neighborhood of $\Lambda$, see~\cite[Appendix B]{BDV} for more detail. Then Theorem~\ref{m.convergency} can be applied to a sequence of physical-like measures supported in this neighborhood.
\end{remark}

\section{Properties of physical-like measures and $\Gibbs F$-states}

In this section we collect some properties of $\Phy(f)$ and $\Gibbs^F(f)$.

\begin{proposition}\label{p.1}
Let $f$ be a $C^1$ diffeomorphism with dominated splitting $E\oplus F$, then
\begin{enumerate}
\item $\Phy(f)\subset \Gibbs^F(f)$.
\item $\Phy(f)$ is non-empty and compact.
\item $\Gibbs^F(f)$ is non-empty and convex.
\item $\Gibbs^F(f)$ is compact if $h_\mu(f)$ is upper semi-continuous w.r.t.\,$\mu$.
\item $\Gibbs^F(f)$ varies upper semi-continuously w.r.t.\,$f$ if $h_\mu(f)$ varies upper semi-continuously w.r.t.\,both $f$ and $\mu$. To be more precise, if $f_n\to f$ (in, say, $C^1$ topology) and  $\mu_n\in \Gibbs^{F_n}(f_n)$ with $\mu_n\xrightarrow{\mbox{weak-*}}\mu$ and satisfy
$$\limsup_nh_{\mu_n}(f_n)\le h_\mu(f),$$
then $\mu\in\Gibbs^F(f)$.
\end{enumerate}
\end{proposition}

\begin{proof}
(1) follows from Proposition~\ref{p.GibbsF}.

\noindent For (2), the non-emptiness and compactness follows immediately from the definition of $\Phy(f)$ and the fact that $\cP(M)$ is compact. See also~\cite{CE} for more detail.

\noindent (3) $\Gibbs^F(f)$ is non-empty because of (1) and (2). It is convex since $h_\mu(f)$ is affine in $\mu$.

\noindent To prove (4), take $\mu_n\in\Gibbs^F(f)$ and assume that $\mu_n\to \mu$ in weak-* topology. If $h_\mu(f)$, as a function of $\mu$, is upper semi-continuous, we obtain
\begin{align*}
h_\mu(f)\ge \limsup_n h_{\mu_n}(f) \ge& \limsup_n  \int \log \mid \det(Df\mid_{F(x)})\mid d\mu_n(x)\\
 =& \int \log \mid \det(Df\mid_{F(x)})\mid d\mu(x).
\end{align*}
So $\mu\in \Gibbs^F(f)$.

\noindent For (5) the proof is similar to (4) and omitted. Note that dominated splitting is persistent under $C^1$ topology: if $f_n\xrightarrow{C^1} f$ then $f_n$ has dominated splitting $E_n\oplus F_n$ such that $E_n\to E$ and $F_n\to F$ in the Grassmannian.
\end{proof}

It is also worthwhile to note that $\Phy(f)$ may not be convex, and there exist examples for which $\Phy(f)\subsetneqq \Gibbs^F(f)$. 
To see such an example, consider a uniformly hyperbolic diffeomorphism $f$ with two disjoint transitive attractors $\Lambda_1$ and $\Lambda_2$, each of which supports an ergodic physical measure $\mu_i$, $i=1,2$. Then $\Phy(f) = \{\mu_1, \mu_2\}$ is not convex. Meanwhile, $\Gibbs^F(f) = \{a\mu_1+(1-a)\mu_2: a\in [0,1]\}$, so $\Phy(f)\subsetneqq \Gibbs^F(f)$. See also~\cite{CE} and the discussion following~\cite[Corollary 2]{CCE}.

Finally we would like to point out that neither $\Phy(f)$ nor $\Gibbs^F(f)$ behave well under ergodic decomposition. There are examples (see~\cite{CE}) such that $\Phy(f)$ consists of a single measure which is not ergodic. Meanwhile it is easy to construct examples such that typical ergodic components of a measure $\mu\in\Gibbs^F(f)$ are no longer in $\Gibbs^F(f)$.

\section{Proof of the main theorem}
In this section we prove Theorem~\ref{m.convergency}.
From now on, $\{f_n\}$ is a sequence of $C^1$ diffeomorphisms with $f_n\xrightarrow{C^1} f$. For convenience we will write $f_0 = f$. Denote by $E_n\oplus F_n$ the dominated splitting for $f_n$. We will take $\mu_n\in \Phy(f_n)$ with $\mu_n\xrightarrow{\mbox{weak-*}}\mu$. Then $\mu$ is an invariant probability of $f$.

Let us briefly explain the structure of the proof. Proving by contradiction, we will assume that the limiting measure $\mu$ is not a Gibbs $F$-state. As a result, the metric pressure of $\mu$:
$$
P_{\mu}(f):= h_\mu(f) - \int \log \mid \det(Df\mid_{F(x)})\mid d\mu(x)
$$
must be negative. Then, for a proper finite partition $\cA$ the metric pressure of $f_n$ with respect to the $n$th join of $\cA$ (and note that such join depends on the map $f_n$) is negative, uniformly in $n$: there exists $b>0$, $N>0$ such that for all $n$ large enough:
$$
\frac{1}{N}H_{\mu_n}\left(\bigvee_{i=0}^{N-1}f_n^{-i}(\cA)\right)-\int  \log \mid \det(Df_n\mid_{F_n(x)})\mid d\mu_n <-b < 0.
$$
This step is carried out in Section~\ref{s.3.1} and~\ref{s.3.2}.

From here the proof largely follows the idea of the variational principle~\cite{Wal}. We will consider the following good set: 
$$
\cG_m^n = \{x: \delta_x^{f_n, m}\in U_n\}
$$
where $U_n$ is a small neighborhood of $\mu_n$ in the space of probability measures. Using the pressure estimate above, we will show in Section~\ref{s.3.3} that the volume of $\cG_m^n$, when restricted to any disk tangent to local $F_n$-cone with dimension equal to $\dim F$, is of order $e^{-bm}$; furthermore, this estimate can be made uniform in $n$.
Then it follows that for Lebesgue almost every point, the empirical measures $\delta_x^{f_n,m}$ can only be in $U_n$ for finitely many $m$'s, contradicting with the choice of $\mu_n\in\Phy(f_n)$.

To this end, we will assume from now on that $\mu\notin \Gibbs^F(f)$. To simplify notation we write
$$
\phi_f^F(x) = -\log \mid \det (Df\mid_{F(x)}) |,
$$
then there exists $a>0$ such that
\begin{equation}\label{e.gap}
h_\mu(f) + \int \phi_f^F(x)\, d\mu(x)\le -a <0.
\end{equation}
Also note that the definition of the function $\phi_f^F(x)$ can be extended to any subspace  $\tilde F(x)\subset T_xM$.

\subsection{A $C^1$ neighborhood of $f$}\label{s.3.1}

We denote by $d^G$ the distance in the Grassmannian manifold. By the continuity of $\phi_f^F$ and the compactness of the Grassmannian, we can choose a $C^1$ neighborhood $\cU$ of $f$ and $\delta_0>0$ small enough, with the following property:

\begin{quotation}
%\begin{center}
For any $g\in \cU$, $x,y\in M$ with $d(x,y)<\delta_0$, and any subspaces $\tilde F(x)\subset  T_xM$, $\tilde F(y)\subset T_yM$ with $\dim \tilde F(x) = \dim \tilde F(y) = \dim F_0$ and
$$
d^G(F_g(x), \tilde F(x))<\delta_0,\,\,\, d^G(F_g(y), \tilde F(y))<\delta_0
$$
where $E_g\oplus F_g$ is the dominated splitting of  $g$ which is the continuation of $E\oplus F$, one has
\begin{equation}\label{e.delta0}
|\phi_g^{\tilde{F}}(x) - \phi_g^{\tilde{F}}(y)|<\frac{a}{1000}.
\end{equation}

%\end{center}
\end{quotation}

We further assume that $\delta_0$ is small enough, such that for every point $x\in M$ the exponential map $\exp_x: T_xM \to M$ sends the $\delta_0$-ball $B_{\delta_0}(0_x)\subset T_xM$ diffeomorphically onto its image.

For any $\delta>0$, we denote by $\C_{\delta}(F_g(x))\subset T_xM$ the $\delta$-cone around $F_g(x)$:
$$
\C_{\delta}(F_g(x))=\{v\in T_xM: |v_{E_g}|\leq \delta |v_{F_g}| \,\mbox{ for } v = v_{E_g}+ v_{F_g}\in E_g\oplus  F_g \}.
$$
By the dominated assumption, the cone field $\C_{\delta}(F_g)$ is invariant under the iteration of $Dg$, i.e., there is $0<\lambda<1$ independent of $\delta$ such that 
$$Dg(\C_{\delta}(F_g(x)))\subset \C_{\lambda \delta}(F_g(g(x))).$$
For $\delta_0$ satisfying~\eqref{e.delta0} above, we will refer to $$
\cC_{\delta_0}(F_g(x))=\exp_x\big(\C_{\delta_0}(F_g(x))\cap B_{\delta_0}(0_x)\big)
$$ 
as the local $F_g$ cone on the underlying manifold $M$. Note that the local $F_g$ cones are invariant in the following sense: there exists  $\delta_1\in (0,\delta_0)$ small enough such that for all $x\in M$ one has 
$$
g\left(\cC_{\delta_0}(F_g(x))\cap B_{\frac{\delta_1}{\|g\|_{C^1}}}(x)\right)\subset \cC_{\delta_0}\big(F_g(g(x))\big)\cap B_{\delta_1}(g(x)).
$$

From now on, $\delta_0$ and $\delta_1$ will be fixed.

\begin{definition}
Given $g\in\cU$, an embedded submanifold $K\subset M$ is said to be {\em tangent to local $F_g$ cone,} 
if for any $x\in K$ one has
$$
K \subset  \cC_{\delta_0}(F_g(x))\cap  B_{\delta_1}(x).\footnote{We slightly abuse notation and use $B_{\delta_0}(\cdot)$ both for balls in $T_xM$ and in $M$; one could easily tell the difference by looking at the center. }
$$
\end{definition}

The following simple lemma is taken from \cite[Lemma 2.3]{PYY}.

\begin{lemma}\label{l.localvolume}
There is a constant $L > 0$ such that for any $g\in \cU$ and for every $x\in M$ and any disk
$D$ tangent to local $F_g$ cone, we have $vol(D) < L$.
\end{lemma}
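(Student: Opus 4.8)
\medskip\noindent
The plan is to show that $\Leb(D)$ is, up to fixed distortion constants, no larger than the volume of a $k$-dimensional ball of radius $\delta_1$ (with $k:=\dim F_0$), and that every constant entering this estimate is uniform in $g\in\cU$ and in the base point, thanks to the compactness of $M$ and the $C^1$-continuity of the splitting $F_g$ over $g\in\cU$. First I would fix $g\in\cU$, an arbitrary point $x_0\in D$, and recall that $D$ is a $k$-dimensional disk. Applying the definition of tangency at the point $x_0$ itself gives $D\subset\cC_{\delta_0}(F_g(x_0))\cap B_{\delta_1}(x_0)$; since $\exp_{x_0}$ is injective on $B_{\delta_0}(0_{x_0})$ and $\delta_1<\delta_0$, it follows that $\hat D:=\exp_{x_0}^{-1}(D)$ is a $k$-dimensional embedded submanifold of $T_{x_0}M$ contained in $\C_{\delta_0}(F_g(x_0))\cap B_{\delta_1}(0_{x_0})$, with $0_{x_0}\in\hat D$. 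I would next observe that the tangency condition, now applied at \emph{every} point $y\in D$, forces $T_yD\subset\C_{\delta_0}(F_g(y))$ (a submanifold passing through the origin and contained in the closed cone $\C_{\delta_0}(F_g(y))$ must have its tangent space there, the cone being invariant under positive scaling). Combining this with the uniform continuity of $y\mapsto F_g(y)$ on $M$ and with the fact that $D\exp_{x_0}$ is, uniformly over $x_0\in M$, close to the identity on the small ball $B_{\delta_1}(0_{x_0})$, one obtains that all tangent planes of $\hat D$ lie in a cone $\C_{c_0}(F_g(x_0))$ of fixed aperture $c_0<1$; here I use that $\delta_0,\delta_1$ were chosen small in Section~\ref{s.3.1} (and may be shrunk further, uniformly in $g\in\cU$, if needed), so that one may take for instance $c_0=\tfrac12$.

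With this in hand, let $\pi\colon T_{x_0}M\to F_g(x_0)$ denote the orthogonal projection. On the cone $\C_{c_0}(F_g(x_0))$ one has $\|v\|\le(1+c_0^2)^{1/2}\|\pi v\|$, so $\pi|_{\hat D}$ is an immersion whose Jacobian is bounded below; by a standard graph-transform argument for narrow cone fields (see, e.g., \cite[Appendix B]{BDV}) it is in fact a diffeomorphism onto an open subset $\Omega\subset F_g(x_0)\cap B_{\delta_1}(0_{x_0})$, so that $\hat D$ is the graph of a $C^1$ map $\psi\colon\Omega\to E_g(x_0)$ with $\operatorname{Lip}(\psi)\le c_0$. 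Then the area formula will give
\[
\mathcal H^{k}(\hat D)=\int_{\Omega}\sqrt{\det\!\big(\operatorname{Id}+(D\psi_w)^{*}D\psi_w\big)}\,dw\ \le\ (1+c_0^2)^{k/2}\,\Leb_k(\Omega)\ \le\ (1+c_0^2)^{k/2}\,\omega_k\,\delta_1^{\,k},
\]
where $\omega_k$ is the volume of the unit ball in $\RR^{k}$. Finally, transporting back through $\exp_{x_0}$, whose differential on $B_{\delta_1}(0_{x_0})$ is bounded in norm by a constant $C_M$ that depends only on $M$ (by compactness), one would conclude $\Leb(D)\le C_M^{\,k}\,\mathcal H^{k}(\hat D)\le C_M^{\,k}(1+c_0^2)^{k/2}\,\omega_k\,\delta_1^{\,k}=:L$. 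As $C_M$, $c_0$, $\delta_1$ and $k$ are all independent of $g\in\cU$ and of $x_0$, this single constant $L$ works for all disks tangent to any local $F_g$ cone, $g\in\cU$.

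There is no serious obstacle in this argument: it is genuinely a bookkeeping lemma, and the heart of the matter is only to make \emph{every} constant in the above chain uniform simultaneously in $g\in\cU$ and in the base point. That uniformity is exactly what the compactness of $M$ (uniform control of $\exp$ and of $y\mapsto F_g(y)$) and the $C^1$-persistence and continuity of the dominated splitting provide. The one ingredient I would simply quote rather than reprove is the graph property of a disk tangent to a sufficiently narrow cone field, used in the second paragraph, which is classical in hyperbolic dynamics.
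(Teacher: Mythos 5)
Your argument is correct; the paper itself offers no proof of this lemma, simply quoting it from \cite{PYY}, and the graph/projection argument you give (tangent planes trapped in a uniformly narrow cone, hence $\hat D$ is a Lipschitz graph over a subset of $F_g(x_0)\cap B_{\delta_1}(0_{x_0})$, hence the area formula bounds its volume) is the standard proof of such statements. The only point worth flagging is that the cone $\C_{c_0}(F_g(x_0))$ is defined via the generally non-orthogonal splitting $E_g\oplus F_g$, so your inequality $\|v\|\le(1+c_0^2)^{1/2}\|\pi v\|$ for the \emph{orthogonal} projection $\pi$ additionally uses that the angle between $E_g$ and $F_g$ is uniformly bounded below over $x\in M$ and $g\in\cU$ --- which holds by domination, compactness and the continuity of the splitting, and only affects the value of the constant $L$.
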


Writing
$$
B_{\delta,n}(x,g):= \{y\in M: d(g^i(x), g^i (y))< \delta, i = 0,\ldots, n-1\}
$$
for the $(\delta, n)$-Bowen ball around $x$. By the contraction of the cone filed on the tangent space, it follows that

\begin{lemma}\label{l.volumebound}
If $K\subset M$ is a disk tangent to local $F_g$ cone with dimension $\dim(F_g)$, then for any $x\in K$ and $n\ge 0$, 
$$g^n\left(K\cap B_{\frac{\delta_1}{\|g\|_{C^1}},n}(x,g)\right)$$
is still tangent to local $F_g$ cone. Moreover,
\begin{equation}\label{e.volumebound}
\Leb_{g^n(K)}\left(g^n\left(K\cap B_{\frac{\delta_1}{\|g\|_{C^1}},n}(x,g)\right)\right)\le L.
\end{equation}
\end{lemma}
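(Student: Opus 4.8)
The plan is to deduce both assertions from the cone-field invariance set up in Section~\ref{s.3.1} --- the inclusion
\[
g\Big(\cC_{\delta_0}\big(F_g(z)\big)\cap B_{\frac{\delta_1}{\|g\|_{C^1}}}(z)\Big)\ \subset\ \cC_{\delta_0}\big(F_g(g(z))\big)\cap B_{\delta_1}\big(g(z)\big)
\]
displayed just before the definition of ``tangent to local $F_g$ cone'' --- together with Lemma~\ref{l.localvolume}. I would prove the first (qualitative) assertion by induction on $n$, in the slightly strengthened form: \emph{for every $g\in\cU$, every disk $K$ of dimension $\dim F_g$ tangent to local $F_g$ cone, and every $x\in K$, the set $g^n\big(K\cap B_{\frac{\delta_1}{\|g\|_{C^1}},n}(x,g)\big)$ is again a disk of dimension $\dim F_g$ tangent to local $F_g$ cone.}

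For $n=0$ the Bowen condition is vacuous, so the set is $K$ itself and there is nothing to prove. For the inductive step I would use the cocycle identity $B_{\delta,n}(x,g)=B_{\delta,1}(x,g)\cap g^{-1}\big(B_{\delta,n-1}(g(x),g)\big)$, which, after applying the diffeomorphism $g$, gives
\[
g^n\big(K\cap B_{\frac{\delta_1}{\|g\|_{C^1}},n}(x,g)\big)\ =\ g^{n-1}\big(K_1\cap B_{\frac{\delta_1}{\|g\|_{C^1}},n-1}(g(x),g)\big),\qquad K_1:=g\big(K\cap B_{\frac{\delta_1}{\|g\|_{C^1}}}(x)\big).
\]
Now $K\cap B_{\frac{\delta_1}{\|g\|_{C^1}}}(x)$ is a relatively open piece of $K$ containing $x$, hence a disk of dimension $\dim F_g$, and for each of its points $z$ it lies inside $\cC_{\delta_0}(F_g(z))\cap B_{\frac{\delta_1}{\|g\|_{C^1}}}(z)$: the cone clause is immediate from the hypothesis that $K$ is tangent to local $F_g$ cone, and the ball clause holds once $\delta_1$ is taken small enough to absorb a bounded factor (see below). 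Applying the displayed cone-invariance inclusion at each such $z$ then shows that $K_1$ is a disk of dimension $\dim F_g$ tangent to local $F_g$ cone, with $g(x)\in K_1$. Feeding the pair $(K_1,g(x))$ into the inductive hypothesis shows that $g^{n-1}\big(K_1\cap B_{\frac{\delta_1}{\|g\|_{C^1}},n-1}(g(x),g)\big)$ --- which by the identity above equals the set in the statement for $n$ --- is a disk of dimension $\dim F_g$ tangent to local $F_g$ cone. This closes the induction; the volume bound~\eqref{e.volumebound} is then immediate, since $g^n\big(K\cap B_{\frac{\delta_1}{\|g\|_{C^1}},n}(x,g)\big)$ is a disk of dimension $\dim F_g=\dim F$ tangent to local $F_g$ cone, to which Lemma~\ref{l.localvolume} applies and gives $\Leb_{g^n(K)}\big(g^n(K\cap B_{\frac{\delta_1}{\|g\|_{C^1}},n}(x,g))\big)<L$.

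The step I expect to require care is the constant bookkeeping hidden in the parenthetical remark: transporting the ``ball clause'' of the definition from a base point ($x$, resp.\ $g(x)$) to an \emph{arbitrary} nearby point of a small disk costs a bounded factor in the radius, which one corrects by shrinking the $\delta_1$ of Section~\ref{s.3.1} (there only required to be ``small enough''); similarly, the fact that the pushed-forward cone remains inside the $\delta_0$-cone and not a slightly wider one uses the uniform contraction $Dg\big(\C_\delta(F_g)\big)\subset\C_{\lambda\delta}(F_g)$ with $\lambda<1$ together with the modulus of continuity of $F_g$ read through the exponential charts. All of this is already packaged into the cone-invariance inclusion we are allowed to quote, so at the level of this outline the proof reduces to the single induction above followed by one invocation of Lemma~\ref{l.localvolume}.
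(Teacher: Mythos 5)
Your proposal is correct and follows essentially the same route as the paper, whose proof is exactly the one-line combination of ``forward invariance of the local cone plus induction'' for the first assertion and an invocation of Lemma~\ref{l.localvolume} for the volume bound~\eqref{e.volumebound}; your write-up merely makes the induction (via the cocycle identity for Bowen balls) and the constant bookkeeping on $\delta_1$ explicit.
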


\begin{proof}The first part of the lemma follows from the forward invariance of the  local cone and induction. %, for $n=1$, $g(K\cap B_{\frac{\delta_0}{\|g\|_{C^1}}}(x))$ is tangent to local $F_g$ cone. The general case is obtained by induction.
The second part is a consequence of Lemma~\ref{l.localvolume}. 
\end{proof}

From now on, we take $\delta_2= \frac{\delta_1}{\sup_{g\in \cU}\{\|g\|_{C^1}\}}$ which will be the size of the Bowen balls and separated sets.

\subsection{A finite partition}\label{s.3.2} The goal of this section is to rewrite~\eqref{e.gap} in terms of the information entropy $H_{\nu}$ of the finite join (under the iteration of the perturbed maps $f_n$) of a  finite partition; here $\nu$ is a probability measure (not necessarily invariant under $f$ or $f_n$) that is close to some $\mu_n$.
To this end, let $\delta_0$ and $\cU$ be given in the previous section, and recall that  $\mu_n\to \mu$ in weak-* topology. We also write $\mu_0 = \mu$.
Fix $\cA$ a finite, measurable partition of $M$ with $\diam(\cA)<\delta_2$, such that
$\mu_i(\partial \cA)=0$ for all $i=0,1,\ldots$. The existence of such a partition follows from the
fact that there are at most countable disjoint sets with positive $\mu_i$ measure for each $i$, thus for any
point $x$, there is a ball with radius $r_x$ arbitrarily small such that the boundary of this ball
has vanishing $\mu_i$ measure for any $i$. Each ball and its complement form a partition, we
can take $\cA$ as the refinement of finitely many such partitions.

Moreover, we can take $\cA$
to be fine enough, such that:
\begin{equation}\label{eq.smalldiameter}
h_\mu(f,\cA)+\int \phi^F_f d\mu <-\frac{999}{1000}a.
\end{equation}
Then there is $N$ large enough, such that
\begin{equation}\label{eq.largestep1}
\frac{1}{N}H_\mu\left(\bigvee_{i=0}^{N-1}f^{-i}(\cA)\right)+\int \phi^F_f d\mu <-\frac{998}{1000}a.
\end{equation}

We would like to replace $\frac{1}{N}H_\mu\left(\bigvee_{i=0}^{N-1}f^{-i}(\cA)\right)$ by $\frac{1}{N}H_{\mu_n}\left(\bigvee_{i=0}^{N-1}f_n^{-i}(\cA)\right)$. This creates an extra difficulty since the partition in question depends on $f_n$. To solve this issue, we introduce the following lemma, whose proof is standard in the
measure theory and is thus omitted.

\begin{lemma}\label{l.measure}
Let $\mu,\mu_i$, $i=1,2\ldots$ be probability measures such that $\mu_n\xrightarrow[n\to\infty]{\mbox{weak-*}}\mu$. Let $A,A_i,i=1,2,\ldots$ be a sequence of measurable sets with the following properties:
\begin{enumerate}
\item $\tilde{A}:=\mbox{int}(\overline{A})$, $\tilde{A}_n:=\mbox{int}(\overline{A}_n)$ satisfy that for every compact set $K\subset \tilde{A}$, there exists $N_K>0$ such that $K\subset \tilde{A}_n$ for all $n>N_K$;
\item the above property holds with $A$ and $A_n$ replaced by $A^c$ and $A_n^c$;
\item $\mu(\partial A) = \mu_i(\partial A_i)=0$, $i=1,2,\ldots$.
\end{enumerate}
then we have $\lim_{n\to\infty}\mu_n(A_n)= \mu(A)$.
\end{lemma}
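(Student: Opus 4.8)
The plan is to prove Lemma~\ref{l.measure} by reducing it to the Portmanteau-type characterization of weak-* convergence and then handling the $n$-dependence of the sets $A_n$ via the compact-set approximation in hypotheses (1) and (2). First I would recall that if $\mu(\partial A)=0$, then weak-* convergence of a fixed sequence of measures to $\mu$ would give $\mu_n(A)\to\mu(A)$; the only new ingredient here is that the set being measured also moves with $n$. The strategy is therefore to squeeze $\mu_n(A_n)$ between $\mu_n$-measures of a fixed compact set contained in the relevant interiors and a fixed open set containing their closures, so that the standard Portmanteau inequalities apply.

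Concretely, fix $\e>0$. Using the inner regularity of $\mu$ and the hypothesis $\mu(\partial A)=0$ (so $\mu(A)=\mu(\tilde A)$), choose a compact set $K\subset\tilde A$ with $\mu(K)>\mu(A)-\e$. By hypothesis (1) there is $N_K$ with $K\subset\tilde A_n\subset A_n$ for all $n>N_K$, hence $\mu_n(A_n)\ge\mu_n(K)$; by the Portmanteau theorem (lower semicontinuity on open sets, applied to any open neighborhood of $K$, or directly to $\mathrm{int}(K)$ after a further small shrinking — more cleanly, pick an open $V$ with $K\subset V$ and $\mu(\overline V)<\mu(K)+\e$, impossible in general, so instead use: $\liminf_n\mu_n(K)\ge\mu(K)$ fails for compact sets, so one should phrase the lower bound through an open set). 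Let me instead run the lower bound as follows: choose a compact $K\subset\tilde A$ with $\mu(K)>\mu(A)-\e$, then choose an open set $W$ with $K\subset W$; by hypothesis (1), $K\subset\tilde A_n$ for $n>N_K$, so $\mu_n(A_n)\ge\mu_n(\tilde A_n)\ge\mu_n(K)$ — but we want a lower bound on $\liminf_n\mu_n(K)$, which is not available for compact $K$. The correct move is to apply hypothesis (1) with the roles reversed conceptually: take $W$ open with $\overline W\subset\tilde A$ and $\mu(W)>\mu(A)-\e$ (possible since $\mu(\tilde A\setminus\partial A)=\mu(A)$ and $\tilde A$ is open), let $K=\overline W$, so $K\subset\tilde A$ is compact, hence $K\subset\tilde A_n$ for $n>N_K$, giving $\mu_n(A_n)\ge\mu_n(W)$; then by Portmanteau for the open set $W$, $\liminf_n\mu_n(W)\ge\mu(W)>\mu(A)-\e$. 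Thus $\liminf_n\mu_n(A_n)\ge\mu(A)-\e$.

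For the upper bound, apply the same argument to the complements: by hypothesis (2), $A^c$ and $A_n^c$ enjoy the same compact-approximation property, and $\mu(\partial A^c)=\mu(\partial A)=0$, so the lower bound just proved yields $\liminf_n\mu_n(A_n^c)\ge\mu(A^c)-\e$, i.e. $\limsup_n\mu_n(A_n)\le\mu(A)+\e$. Since $\e>0$ is arbitrary, combining the two inequalities gives $\lim_n\mu_n(A_n)=\mu(A)$.

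The main obstacle — and it is a mild one, which is presumably why the paper calls the proof standard and omits it — is the bookkeeping around which objects must be open versus compact so that the Portmanteau inequalities apply in the right direction: one cannot directly lower-bound $\liminf_n\mu_n(K)$ for compact $K$, so one must interpose an open set $W$ with $\overline W$ compact and contained in $\tilde A$, and use hypothesis (1) on the compact set $\overline W$ rather than on $K$ itself. Once this is set up, everything reduces to the inner/outer regularity of $\mu$ together with the standard weak-* Portmanteau theorem, with no estimates beyond $\e$-approximation of measures of open sets from inside.
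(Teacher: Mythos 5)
Your argument is correct, and since the paper omits the proof of Lemma~\ref{l.measure} as ``standard,'' there is no authorial proof to compare against; what you give is the natural Portmanteau argument the authors presumably had in mind. Two small points. First, the final write-up should discard the false starts: the clean statement of the lower bound is to pick (by inner regularity of $\mu$ on the open set $\tilde A$, together with $\mu(\tilde A)=\mu(A)$, which follows from $\mu(\partial A)=0$) a compact $K\subset\tilde A$ with $\mu(K)>\mu(A)-\e$, then an open $W$ with $K\subset W\subset\overline W\subset\tilde A$, apply hypothesis (1) to the compact set $\overline W$, and use $\liminf_n\mu_n(W)\ge\mu(W)\ge\mu(K)$. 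Second, you pass from $W\subset\tilde A_n$ to $\mu_n(A_n)\ge\mu_n(W)$ without comment; this needs a word, because $\tilde A_n=\mathrm{int}(\overline{A_n})$ is \emph{not} in general a subset of $A_n$. The point is that $\tilde A_n\setminus A_n\subset\overline{A_n}\setminus\mathrm{int}(A_n)=\partial A_n$, so hypothesis (3) gives $\mu_n(\tilde A_n)\le\mu_n(A_n)$; this is exactly where $\mu_n(\partial A_n)=0$ enters (it is also needed, via $\partial A_n^c=\partial A_n$, when you run the same argument on the complements for the upper bound). With those two clarifications the proof is complete.
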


As an immediate application, we have:

\begin{lemma}\label{l.partition}
For $N>0$ fixed, we have
$$
\lim_{n\to\infty}\frac{1}{N}H_{\mu_n}\left(\bigvee_{i=0}^{N-1}f_n^{-i}(\cA)\right) = \frac{1}{N}H_{\mu}\left(\bigvee_{i=0}^{N-1}f^{-i}(\cA)\right)
$$
\end{lemma}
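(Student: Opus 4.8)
The plan is to deduce Lemma~\ref{l.partition} from Lemma~\ref{l.measure} by applying the latter to each atom of the $N$-fold join. Recall that $\bigvee_{i=0}^{N-1}f_n^{-i}(\cA)$ is the partition whose atoms are the sets $A_{j_0\cdots j_{N-1}}^n := \bigcap_{i=0}^{N-1} f_n^{-i}(A_{j_i})$, where $\cA = \{A_1,\dots,A_k\}$ and $(j_0,\dots,j_{N-1})$ ranges over $\{1,\dots,k\}^N$; the corresponding atoms for $f=f_0$ are $A_{j_0\cdots j_{N-1}} := \bigcap_{i=0}^{N-1} f^{-i}(A_{j_i})$. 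Since $H_\nu\big(\bigvee_{i=0}^{N-1}g^{-i}(\cA)\big) = -\sum_{(j_0,\dots,j_{N-1})} \nu\big(A^n_{j_0\cdots j_{N-1}}\big)\log \nu\big(A^n_{j_0\cdots j_{N-1}}\big)$ is a fixed continuous function (with the usual convention $0\log 0 = 0$) of the finitely many numbers $\mu_n\big(A^n_{j_0\cdots j_{N-1}}\big)$, it suffices to prove that for each multi-index, $\mu_n\big(A^n_{j_0\cdots j_{N-1}}\big)\to \mu\big(A_{j_0\cdots j_{N-1}}\big)$ as $n\to\infty$; then the lemma follows by continuity of $t\mapsto -t\log t$ on $[0,1]$ and dividing by $N$.

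To obtain this convergence I would verify the three hypotheses of Lemma~\ref{l.measure} with $A = A_{j_0\cdots j_{N-1}}$ and $A_n = A^n_{j_0\cdots j_{N-1}}$. Hypothesis~(3) is immediate: by construction $\mu = \mu_0$ and every $\mu_i$ give zero mass to $\partial\cA$, hence to $\partial f^{-i}(A_{j_i})$ and $\partial f_i^{-i}(A_{j_i})$ respectively, since $f_i$ and $f$ are diffeomorphisms and the boundary operation commutes with them; a finite intersection of sets with $\mu_i$-null boundary again has $\mu_i$-null boundary because $\partial(B_1\cap\cdots\cap B_N)\subset \bigcup \partial B_\ell$. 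For hypotheses~(1) and~(2), the key point is the $C^1$-convergence $f_n\to f$: given any compact $K\subset \mathrm{int}\big(\overline{A_{j_0\cdots j_{N-1}}}\big)$, one uses uniform convergence $f_n^i\to f^i$ (for each fixed $i\le N-1$) to push $K$ into the interiors of the sets $f_n^{-i}(A_{j_i})$ for all large $n$, after a short argument reducing to the case of a single atom $A_{j_i}$ of the original partition $\cA$ and then composing. The analogous statement for complements follows the same way, using that the complement of an $N$-fold join atom is a finite union of $N$-fold join atoms, and on each of those the argument for hypothesis~(1) applies (or, more directly, one notes the argument for the interior of $A^c$ is symmetric once one observes $\mathrm{int}(\overline{A^c})\supset (\overline A)^c$ and works with the closed atoms).

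The main obstacle is purely bookkeeping rather than conceptual: one must be careful that $\mathrm{int}(\overline{A_n})$ rather than $A_n$ itself is what is controlled, and that the topological ``sandwiching'' of $K$ between $A$ and $A_n$ survives the pullbacks by the perturbed maps. This is exactly why the boundary-null condition~(3) on $\cA$ was arranged in Section~\ref{s.3.2} for \emph{all} the measures $\mu_i$ simultaneously, and why Lemma~\ref{l.measure} is phrased with interiors of closures: it absorbs the mismatch between $A_n$ and $A$ near their common topological boundary. Since all of these steps are routine measure-theoretic and point-set arguments — which is presumably why the authors call the proof of Lemma~\ref{l.measure} ``standard'' and omit it — I would present the reduction to the atom-wise convergence explicitly and then dispatch hypotheses~(1)--(3) in a single short paragraph, invoking $f_n\xrightarrow{C^1}f$ and continuity of $-t\log t$ to conclude.
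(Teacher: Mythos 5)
Your proposal is correct and follows essentially the same route as the paper: apply Lemma~\ref{l.measure} atom by atom to $A=\bigcap_i f^{-i}(B_i)$ and $A_n=\bigcap_i f_n^{-i}(B_i)$, verify (1)--(2) from the $C^1$ convergence $f_n\to f$ and (3) from $\mu_i(\partial\cA)=0$, then conclude by continuity of $-t\log t$ and finiteness of the partition.
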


\begin{proof}
Elements of $\bigvee_{i=0}^{N-1}f^{-i}(\cA)$ have the form:
$$
A= \bigcap_{i=0}^{N-1} f^{-i}(B_i)
$$
for some sequence $\{B_i\in \cA\}_{i=0}^{N-1}$. Given such a sequence, we denote by
$$
A_n = \bigcap_{i=0}^{N-1} f_n^{-i}(B_i).
$$
Since $f_n$ converges to $f$ in $C^1$ topology and elements of $\cA$ are finite intersections of open balls $B_{r_{x_k}}(x_k)$ and their complements, (1) and (2) of Lemma~\ref{l.measure} are satisfied by the sets $A$ and $A_n$. For (3) of Lemma~\ref{l.measure}, observe that $\mu_n(\partial \cA) = 0$ implies that $\mu_n\left(\bigcup_{i=0}^{N-1}f_n^{-i}(\partial \cA)\right) = 0$ for all $n$, and the same holds for $\mu$ and $f$. It follows that $\mu(\partial A) = \mu_n(\partial A_n) = 0$.

Now we can apply the previous lemma to get $\lim_n \mu_n(A_n) = \mu(A)$. In particular,
$$
-\mu_n(A_n) \log \mu_n(A_n) \to -\mu(A) \log \mu(A).
$$
Summing over all elements of $\bigvee_{i=0}^{N-1}f^{-i}(\cA)$  (and keep in mind that this is a finite partition for fixed $N$) and divide by $N$, we obtain the desired result.
\end{proof}
%{\color{red}
%Since $\mu(\partial \cA)=0$, $\mu(\partial \bigvee_{i=0}^{N-1}f^{-i}(\cA))=0$.  For $N$ fixed, we see that
%$$
%\mathbb{I}_{\cap_{i=0}^{N-1} f_n^{-i}(A_i)}(x) \xrightarrow{n\to\infty} \mathbb{I}_{\cap_{i=0}^{N-1} f^{-i}(A_i)}(x) \mbox{ for $\mu$-a.e. } x
%$$
%and for every sequence $\{A_i: A_i\in \cA\}$. Here $\mathbb{I}_A$ is the indicator function of $A$. By dominated convergence,
%$$
%\mu(\bigcap_{i=0}^{N-1} f_n^{-i}(A_i)) \xrightarrow{n\to\infty} \mu(\bigcap_{i=0}^{N-1} f^{-i}(A_i)).
%$$
%This means that
%$$
%\frac{1}{N}H_\mu(\bigvee_{i=0}^{N-1}f_n^{-i}(\cA))\xrightarrow{n\to\infty}\frac{1}{N}H_\mu(\bigvee_{i=0}^{N-1}f^{-i}(\cA))
%$$
%for every fixed $N$. Combine this with~\eqref{eq.largestep1}, we have
%\begin{equation}\label{eq.largestep2}
%\frac{1}{N}H_\mu(\bigvee_{i=0}^{N-1}f_n^{-i}(\cA))+\int \phi^F_f d\mu <-\frac{997}{1000}a
%\end{equation}
%for $n$ large enough.

%On the other hand, $\mu_n(\partial \cA)=0$ implies $\mu_n(\partial \bigvee_{i=0}^{N-1}f_n^{-i}(\cA))=0$, so %$\mu_n(A)\to \mu(A)$ for every $A\in \bigvee_{i=0}^{N-1}f_n^{-i}(\cA)$ (NOT TRUE!). Also note that $\phi_{f_n}^{F_n} \to \phi_f^F$ in $C^0$.  We can finally rewrite~\eqref{eq.largestep2} as:

Combine Lemma~\ref{l.partition} with~\eqref{eq.largestep1} and use the continuity of $\phi_{f_n}^{F_n}$, we conclude that there exists $N_1\in\NN$ such that
\begin{equation}\label{eq.largestep2}
\frac{1}{N}H_{\mu_n}\left(\bigvee_{i=0}^{N-1}f_n^{-i}(\cA)\right)+\int \phi^{F_n}_{f_n} d\mu_n <-\frac{996}{1000}a,
\end{equation}
for all $n>N_1$.

Note that for each $n$ and $N$, the set $\bigcup_{i=0}^{N-1}f_n^{-i}(\partial\cA)
$ is closed, so its measure varies upper semi-continuously with respect to probability measures (not necessarily invariant by any of $f_n$).
Fix $\varepsilon>0$ small enough. For each $n>N_1$, we can take a small convex neighborhood $U_n \subset \cP(M)$ of $\mu_n$, such that
for any $\nu\in U_n$,
\begin{equation}\label{eq.nu}
\begin{split}
\nu&\left(\bigcup_{i=0}^{N-1}f_n^{-i}(\partial\cA)\right) <\vep,\,\mbox{ and consequently}\\
\frac{1}{N}H_\nu&\left(\bigvee_{i=0}^{N-1}f_n^{-i}(\cA)\right) \le \frac{1}{N}H_{\mu_n}\left(\bigvee_{i=0}^{N-1}f_n^{-i}(\cA)\right) + \frac{1}{1000}a.\\
\\
\end{split}
\end{equation}

By the continuity of $\phi_{f_n}^{F_n}$, we can shrink $\vep$ and finally obtain
\begin{equation}\label{eq.nu2}
\frac{1}{N}H_\nu\left(\bigvee_{i=0}^{N-1}f_n^{-i}(\cA)\right)+\int \phi^{F_n}_{f_n} d\nu <-\frac{994}{1000}a,\,\,\mbox{ for all } \nu \in U_n.\\
\end{equation}

\subsection{From pressure to the measure of the good set}\label{s.3.3}
In this subsection we assume that $I$ is a smooth disk with dimension $\dim F$ that is tangent to local $F_{n}$ cone (to simplify notation we write $F_n = F_{f_n}$), for some $n=0,1,2,\ldots$. Recall that  every $\mu_n$ is a physical-like measure of $f_n$. As a result, there is a positive volume subset $\Lambda_n$ such that for every
$x\in \Lambda_n$, there is a sequence $\{i_k\}$ such that the empirical measures satisfy
\begin{equation}\label{eq.U_n}
\delta^{f_n,i_k}_x\in U_n
\end{equation}
for any $k$, where $U_n$ is the neighborhood of $\mu_n$ in $\cP(M)$ that we chose in the previous subsection such that
\eqref{eq.nu} and \eqref{eq.nu2} hold.

To obtain a contradiction, for $m\in\NN$, we define
$$
\cG^{I,n}_m=\{x\in I\cap \Lambda_n: \delta^{f_n,m}_x\in U_n\}.
$$ The goal is to show that $\cG^{I,n}_m$ have small measure with respect to the Riemannian volume on $I$, uniformly in $n$. %This will contradict with the assumption that $\Lambda_n$ have full volume.

For this purpose, we let $E^n_m$ be a maximal $(\delta_2,m)$-separated set of $\cG^{I,n}_m$ w.r.t. the map $f_n$. Here we drop the dependence of $E^n_m$ on $\delta_2$ since it is fixed throughout the paper.
Consider the following probability measure  $\sigma^n_m$ supported on $E^n_m$:
$$\sigma^n_m=\frac{\sum_{z\in E^n_m} e^{S^{f_n}_m \phi^{F_n}_{f_n}(z)}\delta_z}{\sum_{z\in E^n_m} e^{S^{f_n}_m \phi^{F_n}_{f_n}(z)}}.$$
Here $S^{f_n}_m\phi$ is the Birkhoff sum of $\phi$ w.r.t. the map $f_n$, i.e., $S^{f_n}_m\phi = \sum_{i=0}^{m-1}\phi\circ f_n^i$.

By the convexity of the neighborhoods $U_n$ of $\mu_n$ and the definition of $\cG^{I,n}_m$,  the measure
$$\mu^n_m=\frac{1}{m}\sum_{i=0}^{m-1} (f_n^i)_* \sigma^n_m$$
is a convex combination of $\delta_x^{f_n,m}$ for $x\in \cG^{I,n}_m$ and thus is contained in $U_n$.
%Take a subsequence $m_k$ such that $\mu^n_{m_k}\to \overline{\mu}^n\in U_n$.
Denote by
$$P^n_m=\frac{1}{m}\log \sum_{z\in E^n_m} e^{S^{f_n}_m\phi^{F_n}_{f_n}(z)}$$
the pressure of the separated set $E^n_m$ and $\overline{P}^n=\limsup _m P^n_m$ the limiting pressure for each $f_n$.

\begin{lemma}\label{l.boundedppresure} For $n>N_1$, we have
$$\overline{P}^n<-\frac{994}{1000}a.$$
\end{lemma}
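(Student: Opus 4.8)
The plan is to carry out, in this perturbative and non-invariant setting, the argument underlying the ``easy'' direction of the variational principle, exactly as in~\cite{Wal}. Fix $n>N_1$ and $m\in\NN$; abbreviate $\phi:=\phi^{F_n}_{f_n}$, $\xi:=\bigvee_{i=0}^{N-1}f_n^{-i}(\cA)$ and $\xi^{(m)}:=\bigvee_{i=0}^{m-1}f_n^{-i}(\cA)$. First I would record two elementary identities. Writing $p_z:=e^{S^{f_n}_m\phi(z)}/\sum_{w\in E^n_m}e^{S^{f_n}_m\phi(w)}$ for the $\sigma^n_m$-mass of $z\in E^n_m$, a direct computation of $-\sum_z p_z\log p_z$ gives
\[
mP^n_m=\log\sum_{z\in E^n_m}e^{S^{f_n}_m\phi(z)}=H_{\sigma^n_m}(\zeta)+\int S^{f_n}_m\phi\,d\sigma^n_m ,
\]
where $\zeta$ is the partition of $E^n_m$ into singletons; and unwinding the definition of $\mu^n_m$ gives $\int S^{f_n}_m\phi\,d\sigma^n_m=m\int\phi\,d\mu^n_m$. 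Since $E^n_m$ is $(\delta_2,m)$-separated for $f_n$ and $\diam(\cA)<\delta_2$, any two distinct points of $E^n_m$ fall into distinct atoms of $\xi^{(m)}$, so the trace of $\xi^{(m)}$ on $E^n_m$ is precisely $\zeta$ and hence $H_{\sigma^n_m}(\zeta)=H_{\sigma^n_m}(\xi^{(m)})$.

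Next I would run the standard blocking estimate to replace $\xi^{(m)}$ by $\xi$. For each $0\le j\le N-1$, decompose $\{0,\dots,m-1\}$ into an initial segment of length $j$, consecutive blocks of length $N$, and a terminal segment of length $<N$; using subadditivity of $H_{\sigma^n_m}(\cdot)$ under joins, the bound $H_{\sigma^n_m}(\cA)\le\log\Card(\cA)$, and $H_{\sigma^n_m}(f_n^{-i}\,\cdot)=H_{(f_n^i)_*\sigma^n_m}(\cdot)$, one gets $H_{\sigma^n_m}(\xi^{(m)})\le\sum_l H_{(f_n^{j+lN})_*\sigma^n_m}(\xi)+2N\log\Card(\cA)$. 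Averaging over $j$ and observing that each index $i\in\{0,\dots,m-1\}$ occurs at most once among the $j+lN$,
\[
H_{\sigma^n_m}(\xi^{(m)})\le\frac1N\sum_{i=0}^{m-1}H_{(f_n^i)_*\sigma^n_m}(\xi)+2N\log\Card(\cA).
\]
Since $\nu\mapsto H_\nu(\xi)$ is concave, the average on the right is at most $H_{\mu^n_m}(\xi)$. Combining this with the two identities above and dividing by $m$,
\[
P^n_m\ \le\ \frac1N H_{\mu^n_m}(\xi)+\int\phi\,d\mu^n_m+\frac{2N\log\Card(\cA)}{m}.
\]

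It remains to invoke the pressure gap. As observed just before the lemma, $\mu^n_m$ is a convex combination of the empirical measures $\delta^{f_n,m}_z$ with $z\in E^n_m\subset\cG^{I,n}_m$, each of which lies in $U_n$ by definition of $\cG^{I,n}_m$; since $U_n$ is convex, $\mu^n_m\in U_n$. Hence \eqref{eq.nu2} applies with $\nu=\mu^n_m$ and gives $\frac1N H_{\mu^n_m}(\xi)+\int\phi\,d\mu^n_m<-\frac{994}{1000}a$, so $P^n_m<-\frac{994}{1000}a+\frac{2N\log\Card(\cA)}{m}$; letting $m\to\infty$ yields $\overline{P}^n=\limsup_m P^n_m\le-\frac{994}{1000}a$, and the strict inequality asserted in the lemma follows from the slack built into \eqref{eq.largestep2}--\eqref{eq.nu2} (one may shrink $U_n$ in Section~\ref{s.3.2} so that \eqref{eq.nu2} holds with $-\frac{995}{1000}a$, say). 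I do not expect any genuine obstacle: the two points that need care are the compatibility of the $(\delta_2,m)$-separated set with the atoms of $\xi^{(m)}$ --- which is exactly why $\diam(\cA)<\delta_2$ was imposed in Section~\ref{s.3.2} --- and the routine bookkeeping in the blocking step. Crucially, the partition $\cA$, the integers $N$ and $N_1$, and the constant $-\frac{994}{1000}a$ were all fixed before $n$ and $m$ enter, so the resulting bound on $\overline{P}^n$ is automatically uniform over all $n>N_1$, as needed for the subsequent volume estimate.
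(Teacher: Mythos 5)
Your proposal is correct and follows essentially the same route as the paper: the identity $mP^n_m=H_{\sigma^n_m}\bigl(\bigvee_{i=0}^{m-1}f_n^{-i}\cA\bigr)+\int S^{f_n}_m\phi\,d\sigma^n_m$ (using that $(\delta_2,m)$-separated points lie in distinct atoms since $\diam\cA<\delta_2$), the Walters blocking estimate with the $2N^2\log\#\cA$ error term, concavity of $\nu\mapsto H_\nu(\xi)$ to pass to $\mu^n_m$, and then \eqref{eq.nu2} applied to $\mu^n_m\in U_n$. Your added remark about securing the strict inequality in the limit is a minor refinement the paper glosses over, but it changes nothing of substance.
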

\begin{proof}
The proof is motivated by the proof of variational principle~\cite{Wal}.

For any $l\geq 2N$, write $a(j) = [\frac{l-j}{N}]$ for $0\le j \le N-1$. Here $N>0$ is the integer chosen in the previous section such that~\eqref{eq.largestep1} to~\eqref{eq.nu2} holds. Then for each $j$ we have
$$\bigvee_{i=0}^{l-1} f_n^{-i}\cA=\bigvee_{r=0}^{a(j)-1} f_n^{-(rN+j)}\left(\bigvee_{i=0}^{N-1}f_n^{-i}\cA\right)\vee \bigvee_{i\in \cS} f_n^{-i}\cA,$$
where $\cS$ is a subset of $\{0,1,\ldots,l-1\}$ with $\# \cS\leq 2N$. Therefore
\begin{equation*}
\begin{split}
&mP^n_m= \log \sum_{z\in E^n_m} e^{S^{f_n}_m\phi^{F_n}_{f_n}(z)}=H_{\sigma^n_m}\left(\bigvee_{j=0}^{m-1}f_n^{-j}\cA\right)+\int S^{f_n}_m \phi^{F_n}_{f_n}d\sigma^n_m\\
&\leq \sum_{r=0}^{a(j)-1} H_{\sigma_m^n}\left(f_n^{-(rN+j)} \bigvee_{i=0}^{N-1}f_n^{-i}\cA\right)+H_{\sigma^n_m}\left(\bigvee_{i\in \cS}f_n^{-i}\cA\right)+\int S^{f_n}_m \phi^{F_n}_{f_n}d\sigma^n_m\\
&\leq \sum_{r=0}^{a(j)-1} H_{(f_n^{rN+j})_*\sigma^n_m}\left(\bigvee_{i=0}^{N-1}f_n^{-i}\cA\right)+2N\log \#\cA + \int S^{f_n}_m \phi^{F_n}_{f_n}d\sigma^n_m.\\
\end{split}
\end{equation*}

Summing over $j$ from $0$ to $N-1$:
$$NmP^n_m\leq \sum_{r=0}^{m-1} H_{(f_n^r)_*\sigma^n_m}\left(\bigvee_{i=0}^{N-1}f_n^{-i}\cA\right)+2N^2\log \#\cA  + N\int S^{f_n}_m \phi^{F_n}_{f_n}d\sigma^n_m.$$
Dividing by $mN$ yields
\begin{align*}
P^n_m\leq& \frac{1}{N}H_{\mu^n_m}\left(\bigvee_{i=0}^{N-1}f_n^{-i}\cA\right)+\frac{2N}{m}\log \#\cA  + \int  \phi^{F_n}_{f_n}d\mu^n_m\\
\le& -\frac{994}{1000}a + \frac{2N}{m}\log \#\cA ,
\end{align*}
where the second inequality follows from~\eqref{eq.nu2}.

Sending $m$ to infinity, we conclude the proof.
\end{proof}

The main result of this section is the following lemma:

\begin{lemma}\label{l.volume} For every $n>N_1$ and every $(\dim F)$-dimensional disk $I$ that is tangent to local $F_n$ cone, we have
$$\limsup_{m\to\infty} \frac{1}{m} \log \Leb_{I}(\cG^{I,n}_m)<\overline{P}^n+\frac{1}{1000}a<-\frac{993}{1000}a.$$
\end{lemma}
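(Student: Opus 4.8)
The plan is the volume--theoretic counterpart of the upper bound in the variational principle. We cover $\cG^{I,n}_m$ by dynamical balls centred at the points of the maximal $(\delta_2,m)$--separated set $E^n_m$, control the $\Leb_I$--mass of each piece by pushing it forward under $f_n^m$, and then sum; the pressure $P^n_m$ enters because $f_n^m$ inflates volume along $I$ by a factor comparable to $e^{-S^{f_n}_m\phi^{F_n}_{f_n}}$, while all the images have $\Leb_{f_n^m(I)}$--volume at most $L$ by Lemma~\ref{l.volumebound}. For the covering step, recall that since $E^n_m$ is a \emph{maximal} $(\delta_2,m)$--separated subset of $\cG^{I,n}_m$ for $f_n$, every point of $\cG^{I,n}_m$ lies in some $B_{\delta_2,m}(z,f_n)$ with $z\in E^n_m$; using also $\cG^{I,n}_m\subset I$, this gives $\cG^{I,n}_m\subset\bigcup_{z\in E^n_m}\big(I\cap B_{\delta_2,m}(z,f_n)\big)$.

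Fix $z\in E^n_m$ and put $K=I\cap B_{\delta_2,m}(z,f_n)$, an open subset of $I$. Since $\delta_2\le\delta_1/\|f_n\|_{C^1}$, Lemma~\ref{l.volumebound} gives $\Leb_{f_n^m(I)}\big(f_n^m(K)\big)\le L$. On the other hand $f_n^m$ is a diffeomorphism, so the change of variables along $I$ reads
$$
\Leb_{f_n^m(I)}\big(f_n^m(K)\big)=\int_K|\det(Df_n^m|_{T_yI})|\,d\Leb_I(y)\ \ge\ \Big(\inf_{y\in K}|\det(Df_n^m|_{T_yI})|\Big)\,\Leb_I(K).
$$
To bound the Jacobian from below, use that $\C_{\delta_0}(F_n)$ is forward $Df_n$--invariant and $T_yI\subset\C_{\delta_0}(F_n(y))$, hence $Df_n^i(T_yI)\subset\C_{\delta_0}(F_n(f_n^i(y)))$ for every $i$; in particular $Df_n^i(T_yI)$ is $\delta_0$--close to $F_n(f_n^i(y))$ in the Grassmannian (shrinking $\delta_0$ in Section~\ref{s.3.1} if necessary). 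Since $y,z\in B_{\delta_2,m}(z,f_n)$ we also have $d(f_n^i(y),f_n^i(z))<\delta_2<\delta_0$ for $0\le i\le m-1$. Writing the Jacobian as a Birkhoff sum and applying \eqref{e.delta0} to each term, with base points $f_n^i(y)$, $f_n^i(z)$ and subspaces $Df_n^i(T_yI)$, $F_n(f_n^i(z))$, we get
$$
\Big|-\log|\det(Df_n^m|_{T_yI})|-S^{f_n}_m\phi^{F_n}_{f_n}(z)\Big|=\Big|\sum_{i=0}^{m-1}\phi^{Df_n^i(T_yI)}_{f_n}(f_n^i(y))-S^{f_n}_m\phi^{F_n}_{f_n}(z)\Big|<\frac{ma}{1000},
$$
so $|\det(Df_n^m|_{T_yI})|>e^{-S^{f_n}_m\phi^{F_n}_{f_n}(z)-ma/1000}$, and combining the three displays yields $\Leb_I\big(I\cap B_{\delta_2,m}(z,f_n)\big)\le L\,e^{\,S^{f_n}_m\phi^{F_n}_{f_n}(z)+ma/1000}$.

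Summing this over $z\in E^n_m$ and using the covering from the first paragraph together with the definition of $P^n_m$,
$$
\Leb_I(\cG^{I,n}_m)\le L\,e^{ma/1000}\sum_{z\in E^n_m}e^{S^{f_n}_m\phi^{F_n}_{f_n}(z)}=L\,e^{ma/1000}\,e^{mP^n_m},
$$
hence $\tfrac1m\log\Leb_I(\cG^{I,n}_m)\le\tfrac{\log L}{m}+\tfrac{a}{1000}+P^n_m$. Letting $m\to\infty$ and invoking Lemma~\ref{l.boundedppresure} gives, for $n>N_1$,
$$
\limsup_{m\to\infty}\frac1m\log\Leb_I(\cG^{I,n}_m)\le\overline{P}^n+\frac{a}{1000}<-\frac{993}{1000}a,
$$
and the first inequality can be made strict by allowing oneself a slightly smaller constant (e.g. $a/2000$) in \eqref{e.delta0}, which only strengthens the estimates of Section~\ref{s.3.2}.

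The main obstacle is the Jacobian estimate in the second paragraph: $I$ is only tangent to the local $F_n$ cone, not integral to $F_n$, so one must compare $\det(Df_n^m|_{T_yI})$ — a product over the \emph{tilted} spaces $Df_n^i(T_yI)$ along the orbit of $y$ — with the Birkhoff sum $S^{f_n}_m\phi^{F_n}_{f_n}(z)$ along the orbit of a \emph{nearby} point $z$, and do so with constants that are uniform in $n$. This is exactly what the uniform choice of $\cU,\delta_0,\delta_1,\delta_2$ together with \eqref{e.delta0}, the contraction of the cone field, and the uniform bound $L$ of Lemma~\ref{l.localvolume}/\ref{l.volumebound} were set up to provide; once these are in place, the remaining steps are the standard separated--set/variational-principle bookkeeping.
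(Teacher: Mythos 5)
Your proof is correct and follows essentially the same route as the paper: cover $\cG^{I,n}_m$ by Bowen balls centred at the maximal $(\delta_2,m)$-separated set, push forward by $f_n^m$, bound each image's volume by $L$ via Lemma~\ref{l.volumebound}, and control the Jacobian along the tilted disk by the Birkhoff sum at the centre $z$ up to the factor $e^{ma/1000}$ coming from \eqref{e.delta0} and cone invariance. You in fact supply more detail than the paper does on the Jacobian-comparison step, and your remark that the first inequality as obtained is only non-strict (harmless for the application, since $\overline{P}^n<-\tfrac{994}{1000}a$) applies equally to the paper's own proof.
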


\begin{proof}
By the choice of $\delta_0$, $\delta_2$ and~\eqref{e.delta0}, we obtain
\begin{equation*}
\begin{split}
&\Leb_{I}(\cG^{I,n}_m)\leq \sum_{z\in E^n_m} \Leb_{I}(B_{\delta_2,n}(z,f_n))\\
&\leq \sum_{z\in E^n_m} \Leb_{f^m_n(I)}(f_n^m(B_{\delta_2,n}(z,f_n)))\cdot \mid\det(Df_n^{-m}\mid_{F_n(f_n^m(z))})\mid (e^{\frac{a}{1000}})^m.\\
\end{split}
\end{equation*}

By Lemma~\ref{l.volumebound}, the previous inequality is bounded by
$$\Leb_{I}(\cG^{I,n}_m)\leq L \sum_{z\in E^n_m} \mid\det(Df_n^{-m}\mid_{F_n(f_n^m(z))})\mid (e^{\frac{a}{1000}})^m.$$
Thus
$$\frac{1}{m}\log \Leb_{I}(\cG^{I,n}_m) \leq \frac{1}{m}\log L+ P^n_m +\frac{a}{1000},$$
and
$$\limsup_m \frac{1}{m} \log \Leb_{I} (\cG^{I,n}_m) \leq \limsup_m  P^n_m +\frac{a}{1000}=\overline{P}^n+\frac{a}{1000}<-\frac{993}{1000}a.$$
\end{proof}

\subsection{Proof of Theorem~\ref{m.convergency}}

Fix any $n>N_1$. Recall that $\Lambda_n$ is a positive volume subset such that \eqref{eq.U_n} holds. We take a smooth foliation box $\cB: I^{\dim E}\times I^{\dim F}\to M$ such that $\Leb(\cB\cap \Lambda_n) > 0$ and
for any $a\in I^{\dim E}$, $\cB(a,\cdot)$ maps $\{a\}\times I^{\dim F} $ to a disk $I_a$ that is tangent to  local $F_{n}$ cone. 
Since the foliation chart is smooth, by Fubini theorem, there is $a_n\in I^{\dim E}$ such that the corresponding disk $I_n=I_{a_n}$ satisfies
$\Leb_{I_n}(\Lambda_n)>0$.

On the other hand, Lemma~\ref{l.volume} applied to $I_n$ yields
$$
\limsup_m\frac{1}{m}\log\Leb_{I_n}(\cG_m^{I_n,n}) < -\frac{993}{1000}a < 0.
$$
In particular, this means that
$$
\sum_{m=1}^\infty \Leb_{I_n}(\cG^{I_n,n}_m)<\infty.
$$
By the Borel-Contelli Lemma, we have
$$
\Leb_{I_n}\{x\in I_n: \delta^{f_n,m}_x\in U_n \mbox{ infinitely often}\} =0.
$$
However, this contradicts with the choice of $a_n$ such that $\Leb_{I_n}(\Lambda_n)>0$. We conclude the proof of Theorem~\ref{m.convergency}.

\section{Examples\label{s.examples}}
In this section, we will provide two examples where the metric entropy function is not upper semi-continuous,  yet our main result still applies.
Observe that in these cases, one cannot expect to obtain the continuity of Gibbs $F$-states only from its definition.

\subsection{Statistical stability of singular hyperbolic attractors}

We consider $C^1$ perturbations for the time-one map of singular hyperbolic attractors on three-dimensional manifolds. Let $X$ be a $C^2$ vector field on a compact boundaryless 3-manifold $M$ and $\phi_t$ be the flow induced by $X$.
An attractor $\Lambda$ is called {\em singular hyperbolic}, if all the singularities in $\Lambda$ are hyperbolic, and if there is a dominated splitting for $\phi_t$:
$$
T_\Lambda M = E^s\oplus F^{cu}
$$
with $\dim E^s = 1$, such that $D\phi_t|_{E^s}$ is uniformly contracting, and $D\phi_t|_{F^{cu}}$ is {\em volume expanding}: there exist $C>0$ and $\lambda>1$ such that
$$
|\det D\phi_t|_{F^{cu}}(x)|\ge C\lambda^t
$$
for all $x\in\Lambda$ and $t>0$. Note that this condition prevents trivial measures (i.e., Dirac measure of a singularity or trivial measures on a periodic orbit) to be Gibbs $F^{cu}$-states. Examples of singular hyperbolic attractors include the famous Lorenz attractor. We invite the readers to the book~\cite{ArPa10} for a comprehensive study on this topic.

It is proven in~\cite[Theorem B, C, Corollary 2]{APPV} that every singular hyperbolic attractor has a unique physical measure $\mu$. Moreover, $\mu$ is ergodic, hyperbolic (meaning that $\mu$ has a unique zero Lyapunov exponent which is given by the flow direction), fully supported on $\Lambda$, has absolutely continuous conditional measures on the center-unstable manifolds (these are the images of the Pesin strong unstable manifolds under the flow), and satisfies the entropy formula:
$$
h_\mu(\phi_1) = \int\log |\det D\phi_1|_{F^{cu}}|\,d\mu.
$$
Note that $\phi_1$ is the time-one map of the flow. Denote by $U$ the attracting neighborhood of $\Lambda$. Since $\phi_1$ is $C^2$, by Ledrappier-Young formula~\cite{LY}, we see that $\mu$ is the unique Gibbs $F^{cu}$-state:
$$
\Gibbs^{F^{cu}}(\phi_1|_U) = \{\mu\}.
$$

By Theorem~\ref{m.convergency}, we obtain the statistical stability for $C^1$ perturbations of the time-one map:
\begin{theorem}Assume that $\Lambda$ is a singular hyperbolic attractor for a 3-dimensional flow $\phi_t$ with attracting neighborhood $U$.
Let $\{f_n\}$ be a sequence of $C^1$ diffeomorphisms converging to $\phi_1$ in $C^1$ topology, and $\mu_n$ be a physical-like measure of $f_n$ supported in $U$. Then we have $\mu_n\xrightarrow{weak^*}\mu$, where $\mu$ is the unique physical measure of $\phi_t$ on $\Lambda$.

%In particular, if $X_n$ is a sequence of $C^2$ vector fields converging to $X$ in $C^1$ topology, and $\Lambda_n$ is the maximal invariant set of $X_n$ inside $U$, then the unique physical measures $\mu_n$ supported on $\Lambda_n$ satisfy $\mu_n\xrightarrow{weak^*}\mu$.
\end{theorem}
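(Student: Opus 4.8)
The plan is to derive the statement as a direct consequence of Theorem~\ref{m.convergency} together with the known dynamical properties of singular hyperbolic attractors. The overall structure is: (i) use the higher-regularity results on $\phi_1$ to pin down the set of Gibbs $F^{cu}$-states to a single measure $\mu$; (ii) transfer the dominated splitting from a neighborhood of $\Lambda$ to the maps $f_n$; (iii) apply Theorem~\ref{m.convergency} to conclude that every weak-$*$ limit of $\{\mu_n\}$ is a Gibbs $F^{cu}$-state of $\phi_1$, hence equals $\mu$; (iv) upgrade convergence of subsequences to convergence of the full sequence via compactness of $\cP(M)$.

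More concretely, first I would recall from~\cite{APPV} that $\Lambda$ carries a unique physical measure $\mu$, which is ergodic, hyperbolic, fully supported, and satisfies the entropy formula $h_\mu(\phi_1)=\int\log|\det D\phi_1|_{F^{cu}}|\,d\mu$. Since $\phi_1$ is $C^2$, the Ledrappier--Young formula~\cite{LY} forces any invariant measure attaining the inequality~\eqref{e.GibbsF} for the bundle $F^{cu}$ to have absolutely continuous conditional measures on unstable manifolds and to coincide with the physical measure; moreover the volume-expanding condition on $F^{cu}$ rules out the Dirac measures on singularities or periodic orbits from being Gibbs $F^{cu}$-states. Hence $\Gibbs^{F^{cu}}(\phi_1|_U)=\{\mu\}$. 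Next, because $\Lambda$ admits a dominated splitting $E^s\oplus F^{cu}$ for $\phi_1$, by~\cite[Appendix B]{BDV} this splitting, together with its invariant cone fields, extends to the attracting neighborhood $U$; by persistence of dominated splittings under $C^1$ perturbations (item (5) of Proposition~\ref{p.1}), for all $n$ large the maps $f_n$ admit a dominated splitting $E_n\oplus F_n$ on (a slightly shrunk) $U$ which is the continuation of $E^s\oplus F^{cu}$. Thus the setting of Theorem~\ref{m.convergency} — in the form noted in the Remark following that theorem, allowing the splitting to live only on an invariant set — applies to $\phi_1$ and the sequence $\{f_n\}$.

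Now let $\nu$ be any weak-$*$ limit of a subsequence $\{\mu_{n_j}\}$ of the physical-like measures $\mu_n$ of $f_n$ supported in $U$. Since each $\mu_n$ is supported in the (forward-invariant) attracting neighborhood $U$, the limit $\nu$ is $\phi_1$-invariant and supported in $\Lambda$. By Theorem~\ref{m.convergency}, $\nu\in\Gibbs^{F^{cu}}(\phi_1|_U)=\{\mu\}$, so $\nu=\mu$. Since $\cP(M)$ is weak-$*$ compact and every convergent subsequence of $\{\mu_n\}$ has the same limit $\mu$, the full sequence converges: $\mu_n\xrightarrow{weak^*}\mu$. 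This completes the proof.

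The only genuinely substantive point — the rest being a bookkeeping application of the machinery already developed — is step (i): verifying that $\mu$ is the \emph{unique} Gibbs $F^{cu}$-state of $\phi_1|_U$. The subtlety is that the defining inequality~\eqref{e.GibbsF} is an entropy \emph{inequality}, not the entropy formula, and a priori an invariant measure could satisfy $h_\sigma(\phi_1)\ge\int\log|\det D\phi_1|_{F^{cu}}|\,d\sigma$ without having absolutely continuous unstable conditionals. Here I would invoke Ruelle's inequality~\cite{Rue78} applied to $\phi_1$: the sum of positive Lyapunov exponents along $F^{cu}$ (the only directions that can carry positive exponents, given the uniform contraction on $E^s$) dominates $h_\sigma(\phi_1)$; combined with~\eqref{e.GibbsF} and the fact that $\int\log|\det D\phi_1|_{F^{cu}}|\,d\sigma$ already equals the integral of the full (positive-part) Jacobian along $F^{cu}$ thanks to volume expansion, one concludes equality in Ruelle's inequality, and then the converse part of Ledrappier--Young~\cite{LY} gives absolute continuity of the conditionals; uniqueness of such a measure (equal to $\mu$) then follows from the work in~\cite{APPV}. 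I expect this to be the part requiring the most care to state precisely, while everything else is a routine assembly of Theorem~\ref{m.convergency}, Proposition~\ref{p.1}(5), and the compactness of $\cP(M)$.
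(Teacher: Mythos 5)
Your proposal is correct and follows essentially the same route as the paper: reduce to $\Gibbs^{F^{cu}}(\phi_1|_U)=\{\mu\}$ via \cite{APPV}, Ruelle's inequality and Ledrappier--Young, then apply Theorem~\ref{m.convergency} (in the form of the Remark for splittings on an invariant set) and compactness of $\cP(M)$. The extra care you devote to upgrading the Gibbs inequality to Pesin's entropy formula is exactly the point the paper treats tersely, and your argument for it is sound.
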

We remark that %in the first case,
$f_n$ need not be the time-one map of a vector field $X_n$ that is $C^1$ close to $X$. In this case, since the bundle $F^{cu}$
admits no further domination, it is possible to create homoclinic tangency after $C^1$ perturbation, see Gourmelon~\cite[Theorem~3.1]{Gou10},
 Pujals, Sambarino~\cite{PS00} and Wen~\cite{Wen02} for
previous results along this direction. Following the work of Newhouse~\cite{N1} (see also~\cite{BCT} for a refined construction) local horseshoes with large entropy can be created, which prevents the metric entropy from being upper semi-continuous\footnote{Note that the robust $h$-expansiveness (which implies the upper semi-continuity of $h_\mu(X)$) proven in~\cite{PYY} applies only to nearby flows.}.

\subsection{The example of Bonatti and Viana on $\TT^4$}
Following Ma\~n\'e's study~\cite{Ma78} of derived from Anosov diffeomorphisms on $\TT^3$,
Bonatti and Viana constructed in~\cite{BoV00} (see also \cite{Ta}) a family of robustly transitive diffeomorphisms without any hyperbolic direction. Their examples are obtained by  perturbing a linear Anosov diffeomorphism $A: \TT^4\circlearrowleft$ near two fixed points $p$ and $q$.

Let us briefly recall the construction of the example. Let $A\in \operatorname{SL}(4, \ZZ)$  be a linear Anosov diffeomorphism with four distinct real eigenvalues:
$$
0<\lambda_1<\lambda_2<1/3< 3<\lambda_3<\lambda_4,
$$
and let $p$, $q$ be two fixed points of $A$. We fix some $r>0$ small enough and consider the following perturbation of $A$, and note that such perturbations are $C^0$ small but $C^1$ large:
\begin{enumerate}
\item outside $B_r(p)$ and $B_r(q)$ the map is untouched;
\item in $B_r(p)$ the fixed point $p$ undergoes a pitchfork bifurcation in the direction corresponding to $\lambda_2$; this changes the stable index of $p$ from 2 to 1 and creates two new fixed points inside $B_r(p)$ with stable index $2$, which we denote by $p_1$ and $p_2$;
\item a small perturbation near $B_{r'}(p_1)\subset B_r(p)$ makes the contracting eigenvalues complex;
\item repeat steps (2) and (3) in $B_r(q)$ for $A^{-1}$.
\end{enumerate}
Write
$$
\lambda_{cs} = \sup\{\log\|Df_{BV}\mid_{E^{cs}(x)}\|:x\in B_r(p)\}>0,
$$
$$
\lambda_{cu} = \sup\{\log\|Df^{-1}_{BV}\mid_{E^{cu}(x)}\|:x\in B_r(q)\}>0,
$$
$$
\lambda =\max\{\lambda_{cs},\lambda_{cu}\}.
$$
Choosing $r$, $\lambda>0$ small enough (we refer to~\cite{BoV00} for full detail and~\cite{BF} for a refined construction), we obtain a diffeomorphism which we denote by $f_{BV}$, such that:
\begin{itemize}
\item there exists an open neighborhood $\cU_{BV}\subset \Diff^1(\TT^4)$ of $f_{BV}$ such that every $g\in\cU_{BV}$ is transitive;
\item $g\in\cU_{BV}$ admits a dominated splitting $T\TT^4 = E^{cs}\oplus E^{cu}$ with $\dim E^{cs} = \dim E^{cu} =2$; moreover, $E^{cs}$ and $E^{cu}$ cannot be further split into one-dimensional invariant subbundles;
\item $E^{cs}$ and $E^{cu}$ are integrable (this requires the refined construction in~\cite{BF});
\end{itemize}

The following theorem is proven in~\cite{CDT}.
\begin{theorem}\label{t.BV}\cite[Theorem B]{CDT}
For $r,\lambda>0$ small enough, there exists a $C^1$ neighborhood $\cU$ of $f_{BV}$ such that every $g\in \cU\cap \Diff^2(\TT^4)$ has a unique physical measure $\mu_g$ which is the unique equilibrium state for the potential $\varphi_g = -\log \det (Dg\mid_{E^{cu}})$, and satisfies
$$
P(\varphi_g,g) = h_{\mu_g}(g) + \int \varphi_g\,d\mu_g=0.
$$
\end{theorem}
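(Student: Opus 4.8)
We sketch a proof of Theorem~\ref{t.BV} that is independent of the entropy-structure analysis of~\cite{CDT}. The statement bundles a thermodynamic claim — that $P(\varphi_g,g)=0$ for the geometric potential $\varphi_g=-\log\det(Dg\mid_{E^{cu}})$, with a unique equilibrium state — and a dynamical claim — that this equilibrium state is the unique physical measure; the plan is to run the two in parallel and let them meet at the SRB measure. First one records the features of any $g\in\cU\cap\Diff^2(\TT^4)$ that are used: the dominated splitting $T\TT^4=E^{cs}\oplus E^{cu}$ with two-dimensional factors and the associated invariant cone fields persist; both $E^{cs}$ and $E^{cu}$ integrate to $g$-invariant foliations $\cF^{cs},\cF^{cu}$ (this is the role of the refined construction of~\cite{BF}); the failure of uniform hyperbolicity is confined to the small balls $B_r(p)$, where $E^{cs}$ is mildly expanded, and $B_r(q)$, where $E^{cu}$ is mildly contracted, with defect rates $\lambda_{cs},\lambda_{cu}$ as small as one likes; $E^{cu}$ is pointwise volume-expanding and $E^{cs}$ pointwise volume-contracting (an open condition, arranged for $f_{BV}$ in~\cite{BoV00} by taking $\lambda$ small); and $g$ is robustly transitive.

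For the upper bound $P(\varphi_g,g)\le 0$ I would argue geometrically, in the spirit of Section~\ref{s.3.3}. Cover $\TT^4$ by finitely many plaques tangent to the local $E^{cu}$-cone; on such a plaque $D$, take a maximal $(\varepsilon,n)$-separated set $E_n$. By domination each $D\cap B_{\varepsilon,n}(z,g)$ is again a disk tangent to the local cone, so combining the bounded-distortion estimate available for $C^2$ maps with the uniform volume bound of Lemmas~\ref{l.localvolume} and~\ref{l.volumebound} gives $\Leb_D\big(D\cap B_{\varepsilon,n}(z,g)\big)\ge c\,e^{S^g_n\varphi_g(z)}$ for some $c>0$; since the corresponding Bowen balls are pairwise disjoint in $D$, this yields $\sum_{z\in E_n}e^{S^g_n\varphi_g(z)}\le c^{-1}\Leb(D)$, whence $P(\varphi_g,g)\le 0$. (This takes care for free of invariant measures carrying a negative $E^{cu}$-exponent — these sit near $q$ — which the naive Ruelle inequality handles awkwardly.) For the matching lower bound, invoke the Bonatti--Viana theory~\cite{BoV00}: pushing forward normalized Lebesgue on $D$ and passing to a weak-$*$ limit of Ces\`aro averages produces a Gibbs $E^{cu}$-state $\mu_g$, with absolutely continuous conditionals on $\cF^{cu}$; since $E^{cu}$ is (non-uniformly) expanding along Gibbs $E^{cu}$-states and $E^{cs}$ is mostly contracting, $\mu_g$ is a hyperbolic SRB measure, and the Ledrappier--Young formula~\cite{LY} gives $h_{\mu_g}(g)=\int\log\det(Dg\mid_{E^{cu}})\,d\mu_g$, i.e. $h_{\mu_g}(g)+\int\varphi_g\,d\mu_g=0$. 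Hence $P(\varphi_g,g)=0$, $\mu_g$ is an equilibrium state, and $\mu_g$ is physical since absolute continuity of its Pesin stable lamination (tangent to $E^{cs}$) forces its basin to have positive volume.

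For uniqueness I would run the Climenhaga--Thompson decomposition: split an orbit segment into a prefix, a ``good'' core along which $Dg$ has uniform hyperbolicity estimates for both $E^{cs}$ and $E^{cu}$, and a suffix, the prefix and suffix being precisely the excursions into $B_r(p)\cup B_r(q)$. The good collection has the specification property, because away from the two small balls $g$ is essentially the Anosov map $A$, and $\varphi_g$ has the Bowen property along it (bounded distortion of $\det(Dg\mid_{E^{cu}})$ over uniformly hyperbolic segments, again from $C^2$); the collection of prefixes and suffixes carries pressure strictly below $P(\varphi_g,g)=0$, since such segments are forced to linger in $B_r(p)\cup B_r(q)$ while $r,\lambda_{cs},\lambda_{cu}$ are tiny. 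The machinery then yields a unique equilibrium state for $\varphi_g$, necessarily $\mu_g$; moreover, since by Proposition~\ref{p.GibbsF} every physical measure of $g$ is a Gibbs $E^{cu}$-state, and there is a unique Gibbs $E^{cu}$-state (every such measure being, by the mostly expanding/contracting structure, a hyperbolic SRB measure, and these being unique by robust transitivity), $\mu_g$ is the unique physical measure. (The uniqueness of the Gibbs $E^{cu}$-state can alternatively be obtained geometrically: any two have absolutely continuous conditionals on $\cF^{cu}$, so by robust transitivity and absolute continuity of the Pesin stable laminations their basins overlap in positive volume and they must coincide.)

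Two steps require genuine work. First, the geometric control of center-unstable disks — bounded distortion of $\det(Dg\mid_{E^{cu}})$ and the comparison $\Leb_D(D\cap B_{\varepsilon,n}(z,g))\asymp e^{S^g_n\varphi_g(z)}$ — in the $C^2$, non-uniformly hyperbolic regime in which $\cF^{cu}$ is only H\"older. Second, and more seriously, the verification that the obstruction collection (excursions into $B_r(p)\cup B_r(q)$) carries pressure strictly below $P(\varphi_g,g)$ in the Climenhaga--Thompson decomposition — or, on the geometric route, the robust minimality needed to force two Gibbs $E^{cu}$-states together. This last point is where the hypothesis ``$r,\lambda>0$ small enough'' is really used, and it is the crux of the argument.
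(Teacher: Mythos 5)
You should first note that the paper does not prove Theorem~\ref{t.BV} at all: it is quoted verbatim from \cite[Theorem~B]{CDT} and used as a black box. So there is no in-paper argument to compare against, and your sketch is really an attempt to reconstruct the cited proof. It is not an independent route either: the uniqueness half explicitly invokes the Climenhaga--Thompson decomposition into prefix/good core/suffix with an obstruction-pressure gap, which is precisely the machinery of \cite{CDT}, while the existence half (push forward Lebesgue on a $cu$-disk, absolutely continuous conditionals, Ledrappier--Young \cite{LY}, absolute continuity of the Pesin stable lamination) is the standard \cite{BoV00} package. As an outline of the known proof this is essentially right, and you correctly identify that the whole statement hinges on $r,\lambda$ being small.

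That said, two steps are stated in a way that would fail as written. First, the upper bound $P(\varphi_g,g)\le 0$: your separated-set argument needs the lower volume bound $\Leb_D\bigl(D\cap B_{\varepsilon,n}(z,g)\bigr)\ge c\,e^{S^g_n\varphi_g(z)}$, which requires $g^n\bigl(D\cap B_{\varepsilon,n}(z,g)\bigr)$ to contain a disk of uniform size around $g^n(z)$ inside $g^n(D)$. This holds when $E^{cu}$ is uniformly expanded along the orbit segment (backward iterates of $cu$-disks then contract), but it is exactly what fails for segments lingering in $B_r(q)$, where $Dg^{-1}|_{E^{cu}}$ can expand. So the parenthetical claim that this disposes ``for free'' of measures with a negative $E^{cu}$-exponent is circular: controlling those measures is the same obstruction-pressure estimate you defer to the final paragraph, not a consequence of Lemmas~\ref{l.localvolume} and~\ref{l.volumebound} alone (note that Lemma~\ref{l.volume} of this paper only ever uses the upper-bound direction). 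Second, the alternative ``geometric'' uniqueness argument is not a proof: robust transitivity of $g$ does not imply that the basins of two distinct SRB/Gibbs $E^{cu}$-states intersect in a set of positive volume; one needs (as in \cite{BoV00}) that iterates of every $cu$-disk become uniformly large together with control of the stable holonomies, and the preliminary claim that every Gibbs $E^{cu}$-state is hyperbolic (all $E^{cs}$-exponents negative, all $E^{cu}$-exponents positive) is again a quantitative consequence of $r,\lambda$ small rather than of the qualitative setup. In short: the skeleton matches \cite{CDT}, but the two places you flag as ``requiring genuine work'' are not refinements of an otherwise complete argument --- they are the proof.
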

Combining with Theorem~\ref{m.convergency}, we obtain the continuity of the physical measures for the example of Bonatti and Viana:

\begin{theorem}
Let $\cU$ be the neighborhood of $f_{BV}$ given by Theorem~\ref{t.BV}. Then restricted to $\mathcal{U}\cap\mathrm{Diff}^2(\mathbb{T}^4)$, it satisfies that $\mu_g$ varies continuously (in the weak-* topology) with respect to $g$ in the $C^1$ topology.
\end{theorem}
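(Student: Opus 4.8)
The plan is to deduce the statement directly from Theorem~\ref{m.convergency} and the thermodynamic characterization of $\mu_g$ in Theorem~\ref{t.BV}, using the variational principle to bridge the two. Throughout, the relevant dominated splitting is $T\TT^4=E^{cs}\oplus E^{cu}$, with $E^{cu}$ playing the role of the dominating bundle $F$ from Theorem~\ref{m.convergency}; accordingly, for a $g$-invariant probability $\nu$ the property of being a Gibbs $E^{cu}$-state reads
\[
h_\nu(g)+\int\varphi_g\,d\nu\ge 0,\qquad \varphi_g=-\log\det(Dg\mid_{E^{cu}}),
\]
and $\varphi_g$ is a continuous function on $\TT^4$ since both $E^{cu}$ (in the Grassmannian) and $Dg$ depend continuously on the base point, this continuation of the splitting being available on all of $\cU$.

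Fix $g\in\cU\cap\Diff^2(\TT^4)$ and let $g_n\to g$ in the $C^1$ topology with $g_n\in\cU\cap\Diff^2(\TT^4)$. For each $n$, the measure $\mu_{g_n}$ produced by Theorem~\ref{t.BV} is a physical measure of $g_n$, hence a physical-like measure of $g_n$. Since $\cP(\TT^4)$ is weak-* compact, to prove $\mu_{g_n}\xrightarrow{\mbox{weak-*}}\mu_g$ it suffices to show that every weak-* accumulation point of $\{\mu_{g_n}\}$ coincides with $\mu_g$. So let $\nu$ be such an accumulation point, say $\mu_{g_{n_k}}\xrightarrow{\mbox{weak-*}}\nu$. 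Applying Theorem~\ref{m.convergency} to the sequence $g_{n_k}\to g$ and the physical-like measures $\mu_{g_{n_k}}$, we conclude that $\nu$ is a Gibbs $E^{cu}$-state of $g$, that is, $h_\nu(g)+\int\varphi_g\,d\nu\ge 0$ (in particular $\nu$ is $g$-invariant).

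It remains to identify $\nu$. By the variational principle for the continuous potential $\varphi_g$ together with the normalization $P(\varphi_g,g)=0$ from Theorem~\ref{t.BV}, we have
\[
0\le h_\nu(g)+\int\varphi_g\,d\nu\le P(\varphi_g,g)=0,
\]
so $\nu$ is an equilibrium state for $\varphi_g$ with respect to $g$. The uniqueness clause in Theorem~\ref{t.BV} then forces $\nu=\mu_g$. Since every weak-* accumulation point of $\{\mu_{g_n}\}$ equals $\mu_g$, compactness gives $\mu_{g_n}\xrightarrow{\mbox{weak-*}}\mu_g$, which is the asserted continuity.

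I do not expect a genuine obstacle here: all the substantive work is already contained in Theorem~\ref{m.convergency} (convergence of physical-like measures to a Gibbs $F$-state) and in Theorem~\ref{t.BV} (existence, uniqueness, and the vanishing-pressure normalization of $\mu_g$), and the only routine points to verify are the continuity of $\varphi_g$ and the persistence of the dominated splitting on $\cU$. The one structural reason the statement is restricted to $\cU\cap\Diff^2(\TT^4)$ is that Theorem~\ref{t.BV}—in particular the uniqueness of the equilibrium state invoked in the last step—is only available in the $C^2$ category; without it one could still produce accumulation points that are Gibbs $E^{cu}$-states of $g$ via Theorem~\ref{m.convergency}, but could not pin them down to a single measure.
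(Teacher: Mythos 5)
Your proposal is correct and follows essentially the same route as the paper: both arguments reduce the statement to the fact that Theorem~\ref{t.BV} (via the variational principle and the normalization $P(\varphi_g,g)=0$) identifies $\Gibbs^{E^{cu}}(g)$ with the singleton $\{\mu_g\}$, and then apply Theorem~\ref{m.convergency} to the physical (hence physical-like) measures $\mu_{g_n}$ together with weak-* compactness of $\cP(\TT^4)$. The only difference is that you spell out the subsequence/accumulation-point bookkeeping and the equilibrium-state identification explicitly, which the paper leaves implicit.
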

\begin{proof}
The previous theorem states that
$$
\Gibbs^{E^{cu}}(g) = \{\mu_g\}
$$
for $g\in \cU\cap\Diff^2(\TT^4)$.
Let $g_n,g$ be $C^2$ diffeomorphisms in $\cU$, with $g_n\xrightarrow{C^1}g$. Then Theorem~\ref{m.convergency} shows that $\mu_{g_n}\xrightarrow{\mbox{weak-*}}\mu_g$, as desired.
\end{proof}

Similar to the previous example, since the bundles $E^{cs}$ and $E^{cu}$
admit no further domination, it is possible to create homoclinic tangency after $C^1$ perturbation and therefore the metric entropy is 
not upper semi-continuous. See also~\cite{CDT}, particularly Lemma 6.11, for the characterization on the refined entropy structure for $g\in\cU_{BV}$.


\begin{thebibliography}{10}


\bibitem{ABV00}
J.~F. Alves, C.~Bonatti, and M.~Viana.
\newblock S{RB} measures for partially hyperbolic systems whose central
  direction is mostly expanding.
\newblock {\em Invent. Math.}, 140:351--398, 2000.

\bibitem{An10}
M.~Andersson.
\newblock Robust ergodic properties in partially hyperbolic dynamics.
\newblock {\em Trans. Amer. Math. Soc.}, 362:1831--1867, 2010.

\bibitem{AnV18}
M.~Andersson and C.~V\'{a}squez.
\newblock On mostly expanding diffeomorphisms.
\newblock {\em Ergodic Theory Dynam. Systems}, 38:2838--2859, 2018.

\bibitem{AnV}
M.~Andersson and C.~H. V\'{a}squez.
\newblock Statistical stability of mostly expanding diffeomorphisms.
\newblock arXiv:1710.07970.


\bibitem{ArPa10} V.~Ara{\'u}jo and M.~J. Pacifico.
\newblock {\em
Three-dimensional flows}, volume~53 of \emph{Ergebnisse der Mathematik und
ihrer Grenzgebiete. 3. Folge. A Series of Modern Surveys in Mathematics
[Results in Mathematics and Related Areas. 3rd Series. A Series of Modern
Surveys in Mathematics]}. \newblock Springer, Heidelberg, 2010. \newblock %
With a foreword by Marcelo Viana.



\bibitem{APPV}
V. Ara\'ujo, M. J. Pacifico, E. R. Pujals and M. Viana.
\newblock Singular-hyperbolic attractors are chaotic.
\newblock {\em Trans. Amer. Math. Soc.}, 361:2431--2485, 2009.


\bibitem{BDV}
C. Bonatti, L. Diaz and M. Viana.
\newblock Dynamics beyond uniform hyperbolicity.
\newblock Encyclopaedia of Mathematical Sciences, 102. Mathematical Physics, III. Springer-Verlag, Berlin, 2005.


\bibitem{BoV00}
C.~Bonatti and M.~Viana.
\newblock S{RB} measures for partially hyperbolic systems whose central
  direction is mostly contracting.
\newblock {\em Israel J. Math.}, 115:157--193, 2000.

\bibitem{B97}
J.~Buzzi.
\newblock Intrinsic ergodicity of smooth interval maps. 
\newblock {\em Israel J. Math.} 100 (1997), 125--161.

\bibitem{BCT}
J.~Buzzi, S.~Crovisier and T.~Fisher.
\newblock The entropy of $C^1$-diffeomorphisms without a dominated splitting.
\newblock {\em Trans. Amer. Math. Soc.} 370 (2018), no. 9, 6685--6734.

\bibitem{BF}
J.~Buzzi and T.~Fisher.
\newblock Entropic stability beyond partial hyperbolicity.
\newblock {\em J. Mod. Dyn.} 7 527--552, 2013.


\bibitem{CE}
E.~Catsigeras and H.~Enrich.
\newblock SRB-like measures for $C^0$ dynamics.
\newblock {\em Bull. Pol. Acad. Sci. Math.} 59 (2011), no. 2, 151--164.


\bibitem{CCE}
E.~Catsigeras, M.~Cerminara, and H.~Enrich.
\newblock The Pesin entropy formula for $C^1$ diffeomorphisms with dominated splitting.
\newblock {\em Ergodic Theory Dynam. Systems}, 35(3):737--761, 2015.

\bibitem{CQ}
J.~Campbell and A.~Quas.
\newblock A Generic $C^1$ Expanding Map has a Singular SRB Measure.
\newblock {\em Commun. Math. Phys}. 221, 335--349, 2001.

\bibitem{CDT}
V.~Climenhaga, T.~Fisher and D.~Thompson.
\newblock Unique equilibrium states for Bonatti-Viana diffeomorphisms.
\newblock {\em Nonlinearity} 31 2532--2570, 2018.

\bibitem{CYZ}
S.~Crovisier, D.~Yang and J.~Zhang.
\newblock Empirical Measures of Partially Hyperbolic Attractors.
\newblock {\em Commun. Math. Phys.} 375, 725--764 (2020).

\bibitem{Dol00}
D.~Dolgopyat.
\newblock On dynamics of mostly contracting diffeomorphisms.
\newblock {\em Comm. Math. Phys}, 213:181--201, 2000.

\bibitem{DVY16}
D.~Dolgopyat, M.~Viana, and J.~Yang.
\newblock Geometric and measure-theoretical structures of maps with mostly
  contracting center.
\newblock {\em Comm. Math. Phys.}, 341:991--1014, 2016.

\bibitem{Gou10} 
N. Gourmelon. 
\newblock Generation of homoclinic tangencies by $C^1$-perturbations. 
\newblock {\em Discrete Contin. Dyn. Syst.}, 26:1--42, 2010.

\bibitem{HUY}
M. A. Rodriguez Hertz, R. Ures and J. Yang.
\newblock Robust minimality of strong foliations for $DA$ diffeomorphisms: $cu$-volume expansion and new examples.
\newblock {www.arxiv.org}

\bibitem{HYY}
Y.~Hua, F.~Yang and J.~Yang.
\newblock New criterion of physical measures for partially hyperbolic
  diffeomorphisms.
\newblock{\em Transactions of the American Mathematical Society} 373 (1), 385--417.

\bibitem{K}
G.~Keller.
\newblock Equilibrium states in ergodic theory.
\newblock {\em Cambridge University Press, Cambridge} (1998)

\bibitem{LY}
F. Ledrappier and L.S. Young.
\newblock The metric entropy of diffeomorphisms. I. Characterization of
measures satisfying Pesin’s entropy formula.
\newblock {\em  Ann. Math}. 122 509--39, 1985.

\bibitem{LVY}
G. Liao, M. Viana and J. Yang.
\newblock The entropy conjecture for diffeomorphisms away from tangencies.
\newblock {\em J. Eur. Math. Soc. (JEMS)} 15: 6, 2043--2060, 2013

\bibitem{Ma78}
R. Ma\~n\'e.
\newblock  Contributions to the stability conjecture.
\newblock {\em  Topology}, 17:383--396, 1978.

\bibitem{M}
M. Misiurewicz.
\newblock Diffeomorphim without any measure of maximal entropy.
\newblock {\em Bull. Acad. Pol. Sci.}, 21:903--910, 1973. Series sci. math, astr. et phys

\bibitem{N1}
S. Newhouse.
\newblock Topological entropy and Hausdorff dimension for area preserving diffeomorphisms of surfaces.
\newblock {\em Dynamical systems}, Vol. III Warsaw, 323--334. Ast\'erisque, No. 51, Soc. Math. France, Paris, 1978.


\bibitem{PYY}
M. J. Pacifico, F. Yang and J. Yang.
\newblock Entropy theory for sectional hyperbolic flows.
\newblock {\em Annales de l'Institut Henri Poincar\'e C, Analyse non lin\'eaire}, to appear.

\bibitem{PS00} 
E. Pujals and M. Sambarino. 
\newblock Homoclinic tangencies and hyperbolicity for surface diffeomorphisms.
\newblock {\em Annals of Math.}, 151:961--1023, 2000.

\bibitem{Q}
H.~Qiu.
\newblock Existence and Uniqueness of SRB Measure on $C^1$ Generic Hyperbolic Attractors.
\newblock {\em Commun. Math. Phys.} 302, 345--357 (2011).

\bibitem{Rue78}
D.~Ruelle.
\newblock An inequality for the entropy of differentiable maps.
\newblock {\em Bull. Braz. Math. Soc.}, 9:83--87, 1978.

\bibitem{SYY}
Y.~Shi, F.~Yang and J.~Yang.
\newblock A countable partition for singular flows, and its application on the entropy theory.
\newblock Available at arXiv:1908.01380. 

\bibitem{ST}
W.~Sun and X.~Tian.
\newblock Dominated splitting and Pesin's entropy formula.
\newblock {\em Discrete and Continuous Dynamical System Series A}, 2012, Vol 32(4), 1421--1434.

\bibitem{T}
A.~Tahzibi.
\newblock $C^1$-generic Pesin¡¯s entropy formula.
\newblock{\em C. R. Math. Acad. Sci. Paris}, 335(12):1057--1062, 2002

\bibitem{Ta}
A.~Tahzibi.
\newblock Stably ergodic diffeomorphisms which are not partially hyperbolic. 
\newblock{\em Israel Journal of Mathematics,} v. 142, p. 315--344, 2004.

\bibitem{Wal}
P.~Walters.
\newblock An introduction to ergodic theory.
\newblock Graduate Texts in Mathematics, 79. Springer-Verlag, New York-Berlin, 1982.

\bibitem{Wen02}
L. Wen. 
\newblock Homoclinic tangencies and dominated splittings.  
\newblock {\em Nonlinearity,} 15:1445--1469, 2002.

\bibitem{Yang-partial}
J.~Yang.
\newblock Entropy along expanding foliations.
\newblock arXiv:1601.05504.

\bibitem{Yang-expanding}
J.~Yang.
\newblock Geometrical and measure-theoretic structures of maps with mostly
  expanding center.
\newblock arXiv:1904.10880.

\bibitem{Yom}
Y. Yomdin.
\newblock Volume growth and entropy.
\newblock {\em Israel J. Math.}, 57: 285--300, 1987.

\end{thebibliography}
\end{document}